\theoremstyle{plain} 
\newtheorem{theorem}{Theorem}[section]
\newtheorem{lemma}      [theorem]{Lemma}
\newcommand{\eps} {\varepsilon}                            
\DeclareMathOperator*{\argmin}{arg\,min}                   
\renewcommand{\t} {^{\top}}                                
\newcommand{\norm} [2][]{\left\|#2\right\|_{#1}}           
\newcommand{\bmx}{\left[ \begin{array}}
\newcommand{\emx}{\end{array} \right]}
\newcommand{\bfgamma}{{\boldsymbol{\gamma}}}
\newcommand{\bfA}{{\bf A}}
\newcommand{\bfJ}{{\bf J}}
\newcommand{\bfK}{{\bf K}}
\newcommand{\bfd}{{\bf d}}
\newcommand{\bfe}{{\bf e}}
\newcommand{\bff}{{\bf f}}
\newcommand{\bfg}{{\bf g}}
\newcommand{\bfr}{{\bf r}}
\newcommand{\bfs}{{\bf s}}
\newcommand{\bfzero}{{\bf0}}
\newcommand{\bbR}{\mathbb{R}}
\newcommand{\true}{\mathrm{true}}
\newcommand{\wf}{\mbox{WF}}
\newcommand{\graph}{\mbox{graph}}
\newcommand{\mT}{\mathcal{T}}
\newcommand{\mS}{\mathcal{S}}
\newcommand{\mE}{\mathcal{E}}
\newcommand{\mI}{\mathbb{I}}
\newcommand{\supp}{{\rm supp}}
\newdimen\iwidth
\newdimen\iheight
\definecolor{blue}{rgb}{0,0,1}
\definecolor{red}{rgb}{1,0,0}
\definecolor{green}{rgb}{0,1,0}
\newcommand{\bd}{{\bf d}}
\newcommand{\TheTitle}{Motion Estimation and Correction in Photoacoustic Tomographic Reconstruction}
\title{{\TheTitle}}
\author{
  Julianne Chung
	\thanks{Department of Mathematics, Virginia Tech, Blacksburg, VA \newline \hspace*{10ex}
    \Letter \ \texttt{jmchung@vt.edu} \ \ \Mundus \ {www.math.vt.edu/people/jmchung/}}
  \and
Linh Nguyen
	\thanks{Department of Mathematics, University of Idaho, Moscow, ID 83844 \newline \hspace*{10ex}
    \Letter \ \texttt{{lnguyen@uidaho.edu} \ \ \Mundus \ {http://webpages.uidaho.edu/lnguyen/}}}
}
\begin{document}

\maketitle

\begin{abstract}
Motion, e.g., due to patient movement or improper device calibration, is inevitable in many imaging modalities such as photoacoustic tomography (PAT) by a rotating system and can lead to undesirable motion artifacts in image reconstructions, if ignored. In this paper, we establish a hybrid-type model for PAT that incorporates motion in the model. We first introduce an approximate continuous model and establish two uniqueness results for simple parameterized motion models. Then we formulate the discrete problem of simultaneous motion estimation and image reconstruction as a separable nonlinear least squares problem and describe an automatic approach to detect and eliminate motion artifacts during the reconstruction process. Numerical examples validate our methods.
\end{abstract}

\noindent{\it Keywords}: 
	photoacoustic tomography, uniqueness, motion estimation, dynamic imaging, separable nonlinear least squares, variable projection, hybrid iterative methods
	
	\noindent{\it AMS}:
	 	53C65, 65F22, 92C55, 65R10, 65R32 
	
\thispagestyle{plain}

\section{Introduction} 
\label{sec:introduction}
Photoacoustic tomography (PAT) is a hybrid imaging technique that combines the high contrast of optical imaging with the high resolution of ultrasound imaging. A short pulse of laser light is irradiated through the biological object of interest. Due to the photoelastic effect, the object slightly expands and releases an ultrasound pressure that propagates through the space. The pressure is measured by transducers located on an observation surface $S$. The initial pressure $f(x)$ contains information regarding the inner structures of the object. One concentrates on finding $f(x)$ from the observed data.  

In this article, we are interested in a recent PAT implementation, where there are multiple illuminations and an array of transducers is rotated around the object of interest to collect the data. We will discuss that setup shortly. Meanwhile, let us introduce the standard setup of PAT (e.g., \cite{kruger,oraevsky03opto,wang13biomedical}), where the object is irradiated once and the data is simultaneously measured by all transducers covering the observation surface $S$. The mathematical model of PAT is given then by the wave equation (e.g., \cite{diebold1991photo,Tam})
\begin{eqnarray}
\left\{\begin{array}{l} u_{tt}(x,t) - c^2(x) \, \Delta u(x,t) = 0,\quad (x,t) \in \mathbb{R}^3 \times \mathbb{R}_+,\\[4 pt]
u(x,0) = f(x), \quad u_t(x,0)=0, \quad x \in \mathbb{R}^3.
\end{array} \right.
\end{eqnarray}
Here, function $c(x)$ represents the ultrasound speed, and the measured data is  $g=u|_{S \times \mathbb{R}_+}$. One needs to solve the \emph{inverse problem}: find $f$ given $g$.
This problem has been intensively studied from both theoretical and numerical perspectives (see, e.g., \cite{xu2002exact,XW05,xu2006photoacoustic,anastasio2005half,haltmeier2005filtered,FPR,Kun07,anastasio2007application,FHR,Kun07s,kuchment2008mathematics,haltmeier2009reconstruction,scherzer2009variational,HKN,kuchment2011mathematics,US,qian2011new,stefanov2011thermo,kuchment2014radon,kuchment2015mathematical}, just to name a few). Common techniques to solve the aforementioned inversion problem include explicit inversion formula, time reversal, and series solution.  Let us also mention that PAT in two dimensional space has also been well studied in the literature (see, e.g., \cite{Kun07,FHR}).

In many applications, it is reasonable to assume that $c(x)$ is constant, in which case the data can be represented by the spherical Radon transform $\mathcal{R}(f)$ of $f$ (see, e.g., \cite{FPR}):  
$$\mathcal{R}(f)(z,r) = \int_{\mathbb{S}(z,r)} f(x) \, d\sigma(x), \quad (z,r) \in S \times \mathbb{R}_+,$$ where $\mathbb{S}(z,r)$ is the spherical of radius $r$ centered at $z$.
Hence, the PAT reconstruction process
reduces to the inversion of the spherical Radon transform. When $S$ has special geometry such as a plane or ellipsoid, the spherical Radon transform admits an explicit inversion formula (e.g., \cite{FPR,Kun07,LinhNguyen2009,palamodov2012uniform,natterer2012photo,haltmeier2013inversion,haltmeier2014universal}). The corresponding problem in the two dimensional space reduces to the inversion of the circular Radon transform and has also been thoroughly studied (see, e.g., \cite{Kun07,FHR}). 




Various technical constraints in the standard PAT setup have motivated the development of newer PAT technologies, where transducers are rotated around the object and data is acquired in time. In this article, we are interested in such a 
PAT setup,  which is currently being developed at TomoWave Laboratories (see the company's webpage 
{http://www.tomowave.com/}
 and the reference \cite{brecht09whole}). In this setup, transducers are attached to a half-circle ring, placed vertically, and rotated around the object, see Figure~\ref{fig:problem}. After each illumination and measurement, the ring is slightly rotated. By rotating the ring, one can specify a sphere where the pressures are measured. Under ideal conditions, the full set of measurements is equivalent to that obtained using the standard setup with a spherical observation surface. 

\begin{figure}\label{fig:problem}
\begin{center}
	\includegraphics[width = 2.5in]{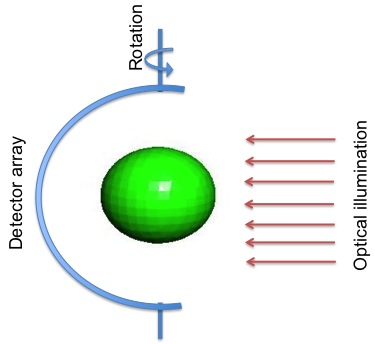}
	\end{center}
\caption{Setup of new time-lapse PAT device, in development at TOMOWAVE.}
\end{figure}

However, in practice, the object that is being imaged may move during the measurement process, thereby presenting a significant challenge to achieving accurate tomographic reconstruction for dynamic PAT systems. For example, in in-vivo imaging, the observations may be affected by the body's natural activities. If ignored, the resulting motion artifacts can inhibit reconstruction quality, thereby leading to incorrect image analysis and false diagnosis.  Although generic pre- or post-processing software may improve the visual appeal of reconstructions, in this paper, we consider a mathematical model that directly incorporates motion for more precise reconstruction. 
In X-ray CT, there has been work on estimating motion from the data (e.g., \cite{katsevich2011local}) and on incorporating motion in reconstruction algorithms (e.g., \cite{rit2009comparison,isola2008motion,van2007motion,blondel20043d,pack2004dynamic,grangeat2002theoretical,desbat2007compensation,taguchi2008motion,taguchi2007motion,katsevich2008motion,katsevich2010accurate,hahn2014efficient,hahn2014reconstruction,hahn2015dynamic,katsevich2016motion,hahn2016detectable}). However, to the best of our knowledge, there is currently no such work for PAT reconstruction, where the forward model is different than that of X-ray CT.  Our work is the first attempt to fill this gap. 

\paragraph{Overview of contributions}  This work has two main goals:
i) to establish a continuous approximate model of PAT reconstruction under motion and to derive two uniqueness results for this theoretical model, and ii) to propose an automatic and robust reconstruction technique for simultaneous estimation of the motion parameters and the desired image.
For the first goal, we will recast the imaging procedure as a hybrid-type inverse problem and approximate it by a continuous model. We will make use of the theory of analytic wave front set and convex analysis to establish two uniqueness results.  Then, for the second goal, since current techniques for standard PAT, such as inversion formula, time reversal, and series solution, do not seem feasible in our framework, our approach is to discretize the problem and employ algebraic techniques.  We reformulate the problem of simultaneous estimation of motion parameters and reconstruction as a separable nonlinear least squares problem and exploit recent work on variable projection approaches \cite{GoPe03,Chung2010b}, where regularization parameter selection can be done automatically. To handle large-scale problems and to avoid expensive computations, we make use of object-oriented programming and exploit matrix sparsity.




The article is organized as follows.  In Section~\ref{sec:model}, we describe the mathematical model for PAT reconstruction under motion.  Then, assuming that the object does not move so rapidly, we derive an approximate continuous mathematical model. In Section~\ref{S:unique}, we provide theoretical results and properties that help in understanding the underlying problem.  In Section~\ref{sec:methods} we describe an efficient computational approach to solve this problem.  Numerical results are provided in Section~\ref{sec:numerical_results}, and conclusions can be found in Section~\ref{sec:conclusions}.

\section{The mathematical model} \label{sec:model}

Let us begin by describing a mathematical model for PAT with motion. For simplicity, in this article we consider the two-dimensional model. Our ideas are extendable to the three dimensional problem; however, the computation and analysis in that case are more involved and will be addressed in a followup paper. For the two dimensional problem, we consider the case of one detector rotating on the unit circle $\mathbb{S}$ to measure the signal. Let us assume that the object is supported inside the unit disc $\mathbb{D}$ of $\mathbb{R}^2$. Let $\{z_i = (\cos \phi_i, \sin \phi_i)\}$, $i=1,\dots,n$, be $n$ locations distributed uniformly counterclockwise along the unit circle $\mathbb{S}$. Fixing $\Delta t >0$, for each moment $t_i = (i-1) \Delta t$, $i=1,\dots,n$, one illuminates the object uniformly and measures the ultrasonic pressure at $z_i$. Assuming the speed of sound is constant, the data can be considered as the circular Radon transform $\mathcal{R}(f_i)(\phi_i,r)$ of $f_i$:
\begin{equation*}\mathcal{R}(f_i)(\phi_i,r) = \int_{\mathbb{C}(\phi_i ,r)} f_i(x) \, ds(x),~ r \in \mathbb{R}_+ \quad i=1,\dots,n,
	\end{equation*} where $\mathbb{C}(\phi,r)$ denotes the circle of radius $r$ centered at $z=(\cos \phi, \sin \phi)$.
Here, $f_i$ is the object at moment $t_i$. It is commonly assumed in imaging science that $f_i$ is a deformation of the steady state $f$, which represents the object at rest. Our problem now reads as follows:

\begin{center}

\emph{find the function $f$ from $\{\mathcal{R}(f_i)(\phi_i,r): i = 1,\dots,n,~0 \leq r \leq 2\}.$}

\end{center}
The data of this problem is of hybrid-type: it is neither discrete (due to the continuous variable $r$) nor continuous (due to the discrete variable $\phi_i$).  Since the desired function $f(x)$ is of two continuous variables, the above problem is underdetermined and the kernel of the forward operator is nontrivial. Another underlying challenge of the above reconstruction problem is ill-posedness, since the forward operator is a smoothing operator. Moreover, the exact motion may not be known a priori. Numerical techniques to handle these challenges will be discussed in Section~\ref{sec:methods}.

\subsection{An approximate continuous model} \label{sec:continuous}
Assume that the object does not change rapidly and that the measurement time $\Delta t$ at each location is small. That is, the object does not change too much between two consecutive illuminations and measurements. Moreover, we assume that the number of measurement locations is big enough. Then, the data can be approximately considered as a function defined on $(\phi,r) \in (\alpha,\beta) \times \mathbb{R}_+$. For each location $z= (\cos \phi, \sin \phi)$, the data is $\{\mathcal{R}(f_\phi)(\phi,r): r>0\}$, where $f_\phi$ is the deformation of $f$ at time $t_\phi$ (when the signal is measured at location $z(\phi)$). Thus, the continuous problem, incorporating time-varying deformations, can be stated as:


\begin{center} 
(P) \quad	\emph{find $f$ given $\mathcal{R}(f_\phi)(\phi,r)$ for all $(\phi,r) \in (\alpha,\beta)  \times \mathbb{R}_+$.}
	\end{center} 
Here, the length of $(\alpha,\beta)$ may be less than $2 \pi$, when the rotation of the transducer does not make a complete circle. In that case, we assume that $(\alpha, \beta) \subset [- \pi/2,3 \pi/2]$. On the other hand, the length of $(\alpha,\beta)$ can also be bigger than $2 \pi$, when the transducer makes more than a complete circle. However, we note that, in the latter case, $f_\phi$ and $f_{\phi+2 \pi}$ may not be the same, if the motion is not periodic. 

The deformation relationship between $f$ and $f_\phi$ is a topic that needs investigation. In this article, we follow the literature in dynamic tomography (e.g., \cite{hahn2014reconstruction,hahn2015dynamic}) and assume that  \footnote{Other works assume a slightly different relation, which accounts for conservation of mass (e.g., \cite{katsevich2016motion})}.
$$f_\phi(\Phi(\phi,x)) = f(x).$$ Here for a fixed $\phi$, $\Phi(\phi,x)$ is the location at moment $t_\phi$ (when the signal is measured at location $z(\phi)=(\cos \phi, \sin \phi)$) of the particle whose location is $x$ when the object is at rest. We will make the following assumptions on the deformation mapping $\Phi$:
\begin{enumerate}
\item[(A.1)]$\Phi(\phi.,)$ is a bijection from $\mathbb{R}^2$ to $\mathbb{R}^2$. Physically, this means two points can not deform into one location. Then, $\Phi(\phi,.)$ has an inverse $\Psi(\phi,.)$ and
$$f_\phi(y) = f(\Psi(\phi,y)).$$
\item[(A.2)] There is  an open subset $\mathcal{O}$ and a compact subset $K$ of the unit open disc $\mathbb{D}$ such that $\supp(f) \subset \mathcal{O}$ and $\Phi(\phi,\mathcal{O}) \subset K$ for all $\phi \in (\alpha, \beta)$. That is, the the object is always contained inside $K$ during the imaging process.  
\item[(A.3)] $\Phi$ is an analytic function in both variables. This technical restriction allows us to obtain theoretical results.
\end{enumerate}
{\bf In general, the deformation mapping $\Phi$ may not be given.} We, instead, have to extract it from the measured data as well. This, in fact, poses the main challenges for PAT with motions, both theoretically and computationally. In this article, we analyze Problem~(P) under this circumstance. 

Next, let us write down the data $\mathcal{R}(f_\phi)$ in terms of $f$. We first notice that
\begin{eqnarray*} \mathcal{R}(f_\phi)(\phi,r) &=& \int_{\mathbb{R}^2} \delta(|z(\phi) - y|-r ) f_\phi(y) \, dy
\\ &=& \int_{\mathbb{R}} \int_{\mathbb{R}^2} e^{i \lambda (|z(\phi) - y|-r )} f_\phi(y) \, dy \, d \lambda.
\end{eqnarray*}
That is,
\begin{eqnarray*} \mathcal{R}(f_\phi)(\phi,r) &=& \int_{\mathbb{R}} \int_{\mathbb{R}^2} e^{i \lambda (|z(\phi) - y|-r )} f(\Psi(\phi,y)) \, dy \, d \lambda. 
\end{eqnarray*}
By the change of variables $x=\Psi(\phi,y)$, i.e., $y=\Phi(\phi,x)$, we obtain
\begin{eqnarray}\label{E:data} \mathcal{R}(f_\phi)(\phi,r)  &=& \int_{\mathbb{R}} \int_{\mathbb{R}^2} e^{i \lambda (|z(\phi) - \Phi(\phi,x)|-r )} J_x[\Phi(\phi,x)] \, f(x) \, dx \, d \lambda,
\end{eqnarray} where 
$$J_x[\Phi(\phi,x)] = |\det D_x \Phi(\phi,x)|$$
is the absolute value of the Jacobian of the change of variables $x \to y= \Phi(\phi,x)$. 
That is, 
\begin{eqnarray}\label{E:geo}  \nonumber \mathcal{R}(f_\phi)(\phi,r) &=& \int_{\mathbb{R}^2} \delta(|z(\phi) - \Phi(\phi,x)|-r ) \, J_x[\Phi(\phi,x)] \, f(x) \, dx  \\ \nonumber &=&  \int_{\mathcal{E}(\phi,r)} \frac{J_x[\Phi(\phi,x)] }{|\nabla_x |z(\phi) -\Phi(\phi,x)||}f(x) \, d\sigma(x) \\ &=&  \int_{\mathcal{E}(\phi,r)} \frac{J_x[\Phi(\phi,x)] \, |z(\phi) -\Phi(\phi,x)|}{|\left< z(\phi) - \Phi(\phi,x), D_x\Phi(\phi,x) \right> |}f(x) \, d\sigma(x).
\end{eqnarray}
Here,
\begin{eqnarray*} \mathcal{E}(\phi,r) &=& \{x \in \mathbb{R}^2: |z(\phi) - \Phi(\phi,x) |=r \}\\ &=&  \Phi^{-1} \{\mathbb{C}(\phi,r)\}  \\ &=& \Psi  \{\mathbb{C}(\phi,r)\}.
\end{eqnarray*}
Since $\Psi$ is analytic, $\mathcal{E}(\phi,r)$ is a closed analytic curve in $\mathbb{R}^2$. 

Let us now denote
$$ \mathcal{R}_\Phi(f)(\phi,r)= \int_{\mathcal{E}(\phi,r)} w(\phi,x) \, f(x)  \,d \sigma(x),$$ where $$w(\phi,x)=\frac{J_x[\Phi(\phi,x)]  \, |z(\phi) -\Phi(\phi,x)|}{|\left< z(\phi) - \Phi(\phi,x), \nabla_x \Phi(\phi,x)| \right> |}. $$
Now, Problem~(P) can be reformulated as:
\begin{center} 
	\emph{find $f$ given $\mathcal{R}_\Phi (f)(\phi,r)$ for all $(\phi,r) \in (\alpha,\beta) \times \mathbb{R}_+$.}
	\end{center} 

\medskip

Let us note that $\mathcal{R}_\phi$ is a generalized Radon transform (see, e.g., \cite{gel1969differential,quinto1980dependence,guillemin1985some,guillemin1990geometric,quinto1994radon,palamodov2012uniform}) with the incidence relation defined by
$$I(x,\phi,r):= \frac{1}{2} \big(|z(\phi) - \Phi(\phi,x)|^2 -r^2 \big) =0. $$
It is a Fourier integral operator with the canonical relation
$$\mathcal{C} = \{(\phi,r, \lambda \, \partial_\phi I, \lambda\, \partial_r I; x,\lambda \, \partial_x I): I(x,\phi,r) = 0, \lambda \neq 0\}.$$
It can be parametrized by 
$$(\phi,x,\lambda) \in  (\alpha,\beta) \times \mathcal{O} \times (\mathbb{R} \setminus 0).$$
Let us consider its left projection 
$$\pi_L: (\phi,x,\lambda)  \longmapsto \big(\phi, |z(\phi) - \Phi(\phi,x)| ,  \lambda \, \partial_\theta I, \lambda \, \partial_r I \big),$$
and right projection
$$\pi_R: (\phi,x,\lambda) \longmapsto  \big(x, \lambda \, \partial_x I \big).$$

In order to analyze Problem (P), we will assume the following condition: 
\begin{itemize}
\item[(A.5)]  The left projection $\pi_L$ is an injective immersion. This is the so-called Bolker condition (see, e.g., \cite{quinto1980dependence,guillemin1985some,guillemin1990geometric,quinto1994radon}). 
\item[(A.6)] The right projection $\pi_R$ is a surjective submersion. Intuitively, this condition means that all the singularities of $f$ are visible to the transform $\mathcal{R}_\Phi$.
\end{itemize}
As proved in Appendix~\ref{A:Bolker}, these two conditions are equivalent to
\begin{itemize}
\item[(B)] (Bolker's condition) For each $\phi \in (\alpha,\beta)$, the mapping $$\Pi: x \in \mathcal{O} \to (\frac{1}{2} |z(\phi) -\Phi(\phi,x)|^2 ,\frac{1}{2} \partial_\phi |z(\phi) -\Phi(\phi,x)|^2)$$ is an injective immersion.
\item[(C)] (Visibility condition) For each $x \in \mathcal{O}$, the set of unit normal vectors of all curves $\mathcal{E}(\phi,r)$ passing through $x$ covers the unit circle $\mathbb{S}$. 
\end{itemize}

Let us note that condition (B) was proposed in \cite{HomanZhou}. However, it was not proved to be equivalent to (A.5), since it was irrelevant for its purpose. This fact will be proved in Appendix~\ref{A:Bolker} for the sake of completeness. Although it is not trivial to verify conditions (B) and (C) for a general family of deformations $\Phi$, it is possible for simple cases. In particular, in Section~\ref{Ss:stretching}, we will present a model of deformation for which these conditions hold.

\subsection{A simple model: vertical stretching} \label{Ss:stretching} Consider $\Phi$ to be a family of vertical stretching in $\mathbb{R}^2$. That is,
$$\Phi(\phi,x) = (x_1, c+ a(\phi) (x_2-c) ),$$ where $a(\phi)>0$ is the stretch factor corresponding to the measurement at transducer $z(\phi)$. The horizontal line $x_2 = c$ is unchanged under the above vertical stretch. We will refer to it as the base line of the stretch. Let us prove that under appropriate assumptions, this model satisfies conditions (B) and (C).

\begin{lemma} \label{L:Cond2} Let $\eps>0$ be such that $K \subset \mathbb{D}_{1-\eps}$ and
$$C_\epsilon= \sup_{z \in \mathbb{S},~ x \in \mathbb{D}_{1-\eps}} \frac{|z -x|}{|\left<z -x,z \right>|} = \frac{1}{\sqrt{2 \eps - \eps^2}}.$$
Assume that $$\left|\frac{a'(\phi)}{a(\phi)} \right| \leq \frac{1}{(3+|c|) \, C_\epsilon} \mbox{ for all } z \in \mathbb{S}.$$ Then, the Bolker condition (B) holds. 
\end{lemma}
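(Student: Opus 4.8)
The plan is to verify that the map $\Pi$ of condition (B) is an injective immersion by a direct computation, carried out in the deformed variable $y=\Phi(\phi,x)=(x_1,\,c+a(\phi)(x_2-c))$ and the displacement $v=z(\phi)-y=(p,q)$. For a fixed $\phi$ the stretch $x\mapsto y$ is an affine bijection with constant Jacobian $a(\phi)>0$, so it is equivalent — and geometrically cleaner for the injectivity step — to regard $\Pi$ as a map of $y$ on the image $\Phi(\phi,\mathcal{O})\subset K\subset\mathbb{D}_{1-\eps}$. Before estimating anything I would record the two geometric facts that drive every bound: since $y\in\mathbb{D}_{1-\eps}$ and $z(\phi)\in\mathbb{S}$,
\[
\left< v,\,z(\phi)\right>=1-\left< y,\,z(\phi)\right>\ge\eps>0,
\qquad
\frac{|v|}{\left< v,\,z(\phi)\right>}\le C_\epsilon ,
\]
the second being precisely the definition of $C_\epsilon$.

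\textbf{Immersion.} Writing $\Pi_1=\tfrac12|v|^2$ and $\Pi_2=\tfrac12\partial_\phi|v|^2=-p\sin\phi+q\cos\phi-q\,a'(\phi)(x_2-c)$ (the $\partial_\phi$ taken at fixed $x$), a short calculation gives $\nabla_x\Pi_1=(-p,\,-a(\phi)q)$ and a Jacobian whose determinant factors as
\[
\det D_x\Pi=a(\phi)\Bigl[\left< v,\,z(\phi)\right>+\tfrac{a'(\phi)}{a(\phi)}\,p\,\bigl(q-a(\phi)(x_2-c)\bigr)\Bigr].
\]
Because $a(\phi)>0$ and the leading term is $\ge\eps>0$, non-vanishing follows once the bracketed perturbation is dominated. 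I would bound $|p|\le|v|$ and, using $|q|\le|v|\le2$ and $|y_2-c|\le1+|c|$, also $|q-a(\phi)(x_2-c)|=|q-(y_2-c)|\le3+|c|$; combining with $|v|\le C_\epsilon\left< v,z(\phi)\right>$ turns the hypothesis $|a'/a|\le\bigl((3+|c|)C_\epsilon\bigr)^{-1}$ into $|\text{perturbation}|\le\left< v,z(\phi)\right>$, and the inequality is strict because $|q-(y_2-c)|=3+|c|$ would force $|v|=2$, impossible for $y$ strictly inside the disc. Hence $\det D_x\Pi\neq0$.

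\textbf{Injectivity.} Here I would exploit that $\Pi_1$ depends on $x$ only through $r:=|z(\phi)-y|$, so a level set $\{\Pi_1=\tfrac12 r^2\}$ is the stretch-preimage of the circular arc $\{|z(\phi)-y|=r\}\cap\mathbb{D}_{1-\eps}$. Parametrizing $v=r(\cos\psi,\sin\psi)$, the constraint $\left< v,z(\phi)\right>=r\cos(\psi-\phi)\ge\eps$ confines $\psi$ to a single interval, on which $y\leftrightarrow\psi$ is a bijection. A computation gives
\[
\frac{d\Pi_2}{d\psi}=r\cos(\psi-\phi)-k\,r(\sin\phi-c)\cos\psi+k\,r^2\sin2\psi,\qquad k=\frac{a'(\phi)}{a(\phi)},
\]
and estimating the last two terms by $|k|\,r(3+|c|)\le|k|(3+|c|)C_\epsilon\left< v,z(\phi)\right>$ exactly as above (using $r<2$ to absorb $r^2$ into $2r$) forces $d\Pi_2/d\psi>0$ under the hypothesis. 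Thus $\Pi_2$ is strictly monotone along each level arc of $\Pi_1$; since two points with the same $\Pi_1$ share the same $r$ and hence the same arc, equality of $\Pi_2$ forces equality of $\psi$, i.e. $y=\tilde y$. This is in fact the same estimate as the immersion bound, via $d\Pi_2/d\psi=\pm\det D_x\Pi/|\nabla_x\Pi_1|$.

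\textbf{Main obstacle.} The local non-degeneracy $\det D_x\Pi\neq0$ is essentially a one-line consequence of the estimate; the real work is globalizing it to injectivity. The crucial structural input is that the level sets of $\Pi_1$ are single circular arcs — guaranteed by the constraint $\left< v,z(\phi)\right>\ge\eps$ (equivalently, that two circles meet in at most two points) together with the convexity of $\mathbb{D}_{1-\eps}$ — on which $\psi$ is a legitimate monotone parameter. The only quantitative delicacy is checking that the crude bounds $|p|\le|v|$, $|q|\le|v|\le2$, $|y_2-c|\le1+|c|$ assemble to exactly the constant $3+|c|$ and that equality is excluded for $y$ strictly inside the disc, so that the borderline hypothesis still yields the strict inequalities needed for both parts.
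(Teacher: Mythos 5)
Your immersion step is, modulo the change of variables $y=\Phi(\phi,x)$, exactly the paper's computation: the same determinant identity (your $kp\bigl(q-a(\phi)(x_2-c)\bigr)$ is the paper's $\frac{a'(\phi)}{a(\phi)}(\cos\phi-x_1)(\sin\phi-2\Phi_2(\phi,x)+c)$) and the same two bounds. Your injectivity step, however, is a genuinely different route. The paper argues by contradiction using a \emph{two-point} chord inequality: for two points of $\mathbb{D}_{1-\eps}$ equidistant from $z(\phi)$, the chord satisfies $|\Phi(\phi,x)-\Phi(\phi,y)|<C_\epsilon\,|\langle\Phi(\phi,x)-\Phi(\phi,y),z(\phi)^\perp\rangle|$; every quantity in that argument is evaluated only at the two given points, both of which lie in $K$ by hypothesis. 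You instead integrate the immersion estimate, showing $\Pi_2$ is strictly monotone in the angular parameter along each level arc of $\Pi_1$. This is an attractive unification --- injectivity and immersion become the same inequality --- but it obliges you to control $d\Pi_2/d\psi$ at every \emph{intermediate} point of the arc, not just at the two given ones, and that is where your write-up has a real gap.

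Concretely, you claim $d\Pi_2/d\psi>0$ on the interval $\{\psi:\ r\cos(\psi-\phi)\ge\eps\}$. But your estimate of the perturbation uses $r\le C_\epsilon\, r\cos(\psi-\phi)$, i.e.\ $\cos(\psi-\phi)\ge 1/C_\epsilon$, and $|2q-\sin\phi+c|<3+|c|$, i.e.\ $|y_2|<1$. Both of these are consequences of $y\in\mathbb{D}_{1-\eps}$ (that is precisely how $C_\epsilon$ is defined as a supremum over the disc); neither follows from $r\cos(\psi-\phi)\ge\eps$ alone. Indeed, on your constraint interval one only knows $|v|/\langle v,z(\phi)\rangle=1/\cos(\psi-\phi)\le r/\eps$, and for $r$ of order $1$ this vastly exceeds $C_\epsilon\approx 1/\sqrt{2\eps}$ when $\eps$ is small; at such points $y$ lies far outside $\mathbb{D}_{1-\eps}$ and the hypothesis on $|a'/a|$ no longer dominates the perturbation, so monotonicity can genuinely fail there. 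The repair is the fact you half-name only in your closing paragraph: the circle $\{|z(\phi)-y|=r\}$ meets the convex disc $\mathbb{D}_{1-\eps}$ in a \emph{single} closed arc, because the two boundary circles intersect in at most two points and the center $z(\phi)$ lies outside the disc. Hence the sub-arc joining any two points of $K$ on this circle stays inside $\mathbb{D}_{1-\eps}$, and your derivative estimate is valid along all of it. Run the monotonicity argument on this disc-arc rather than on the constraint interval and your proof closes; as written, the interval you chose is strictly larger than the set on which your inequality holds.
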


\begin{proof}   Let us first verify that the mapping
$$\Pi: x \in \mathcal{O} \to \left(\frac{1}{2} |z(\phi) -\Phi(\phi,x)|^2 ,\frac{1}{2} \partial_\phi |z(\phi) -\Phi(\phi,x)|^2\right)$$ is an immersion. Indeed, we notice that
\begin{eqnarray*}  |z(\phi) - \Phi(\phi,x)|^2  &=&  |z_1(\phi) -\Phi_1(\phi,x)|^2 + |z_2(\phi) - \Phi_2(\phi,x)|^2 \\ &=& (\cos \phi - x_1)^2  + (\sin \phi - c - a(\phi) \, (x_2 -c))^2 .\end{eqnarray*}
Therefore,
\begin{eqnarray}\label{E:dz} \frac{1}{2} \partial_\phi |z(\phi) - \Phi(\phi,x)|^2  &=&  (\cos \phi - \Phi_1(x))  (-\sin \phi) \\ & + &\big(\sin \phi -\Phi_2(\phi,x) \big) \big(\cos \phi -a'(\phi) (x_2-c) \big). \nonumber \end{eqnarray}
Hence, 
\begin{eqnarray*} \nabla_x \Big( \frac{1}{2} \partial_\phi |z(\phi) - \Phi(\phi,x)|^2\Big)  &=& \big (\sin \phi, -a(\phi) \cos \phi + a(\phi) a'(\phi) (x_2-c) - a'(\phi) (\sin \phi - \Phi_2(\phi,x) \big).\end{eqnarray*}
On the other hand
\begin{eqnarray*} \nabla_x \Big(\frac{1}{2} |z(\phi) - \Phi(\phi,x)|^2\Big)  &=& -  \big(\cos \phi - \Phi_1(\phi,x), (\sin \phi - \Phi_2(\phi,x)) a(\phi) \big).\end{eqnarray*}
Therefore, 
\begin{eqnarray*} \det J_\Pi(x)  &=& [(\cos \phi -\Phi_1(\phi,x)) \cos \phi + (\sin \phi - \Phi_2(\phi,x)) \sin \phi ] \, a(\phi) \\ &+& [\sin \phi - \Phi_2(\phi,x) - \, a(\phi) (x_2-c)] (\cos \phi -x_1) \,  a'(\phi).\end{eqnarray*}
That is, 
\begin{eqnarray*} \frac{1}{a(\phi)} \det J_\Pi(x)  = \left<z(\phi) - \Phi(\phi,x) , z(\phi) \right> + [\sin \phi -2 \Phi_2(\phi,x)+c] (\cos \phi -x_1) \,  \frac{a'(\phi)}{a(\phi)}.\end{eqnarray*}
Let us show that $J_\Pi(x) \neq 0$ for all $x \in \mathcal{O}$. Indeed, since $\left|\frac{a'(\phi)}{a(\phi)} \right| \leq \frac{1}{(3+|c|) C_\epsilon}$, it suffices to show
$$ |\left< z(\phi) - \Phi(\phi,x), z(\phi) \right> | > \frac{1}{(3+|c|) C_\epsilon}|\sin \phi - 2 \, \Phi_2(\phi,x) +c|  \, |z_1(\phi)- x_1|.$$
This is true because (since $\Phi(\phi,x) \in K$)
$$|\left< z(\phi) - \Phi(\phi,x), z(\phi) \right> | > \frac{1}{C_\epsilon } |z(\phi) - \Phi(\phi,x)| \geq   \frac{1}{C_\epsilon }  |z_1(\phi)- x_1|. $$
and
$|\sin \phi - 2 \, \Phi_2(\phi,x) +c| <3+|c|$. This finishes the first part of the proof.

\medskip

Let us now verify that $\Pi$ is injective. Given $x \neq y \in \mathcal{O}$, we want to show that if $|z(\phi) -\Phi(\phi,x)| = |z(\phi) -\Phi(\phi,y)|$ then
$$\frac{1}{2} \partial_\phi |z(\phi) -\Phi(\phi,x)|^2 \neq \frac{1}{2} \partial_\phi |z(\phi) -\Phi(\phi,y)|^2.$$
Assume by contradiction that the equality holds. We note that (see, e.g., (\ref{E:dz}))
$$\frac{1}{2} \partial_\phi |z(\phi) -\Phi(\phi,x)|^2 = \Big<z(\phi) - \Phi(\phi,x), z(\phi) ^\perp - \big(0,a'(\phi) (x_2-c) \big) \Big>.$$
Therefore,
\begin{eqnarray*}
\left<z(\phi) - \Phi(\phi,x), z(\phi)^\perp - \big(0,a'(\phi) (x_2-c) \big) \right> = \left<z(\phi) -\Phi(\phi,y), z(\phi)^\perp - \big(0,a'(\phi) (y_2-c) \big) \right>.
\end{eqnarray*}
That is,
\begin{eqnarray*}
\left<\Phi(\phi,y) - \Phi(\phi,x), z(\phi)^\perp \right> =a'(\phi) \left[(z_2(\phi)-\Phi_2(\phi,x))(x_2-c) -(z_2(\phi)-\Phi_2(\phi,y)) (y_2-c) \right].
\end{eqnarray*}
We obtain
\begin{eqnarray} \label{E:ineq} \nonumber
\left<\Phi(\phi,y) - \Phi(\phi,x), z(\phi)^\perp \right> &=& a'(\phi)(x_2-y_2) (z_2(\phi)-a(\phi) \, (x_2+y_2 -2c)- c) 
 \\ &=& \frac{a'(\phi)}{a(\phi)}(\Phi_2(\phi,x) -\Phi_2(\phi,y) ) (z_2(\phi)-\Phi_2(\phi,x)-\Phi_2(\phi,y)+c).
\end{eqnarray}
Since $\Phi(\phi,x),\Phi(\phi,y) \in K \subset \mathbb{D}_{1-\eps}$ and $|z(\phi) -\Phi(\phi,x)|= |z(\phi) -\Phi(\phi,y)|$, a simple geometric observation gives
$$|\Phi(\phi,x)-\Phi(\phi,y)| < C_\epsilon \left|\left<\Phi(\phi,x)-\Phi(\phi,y),z(\phi)^\perp \right> \right|\,.$$
In particular, this implies:
$$|\Phi_2(\phi,x)-\Phi_2(\phi,y)| < C_\epsilon \, \left|\left<\Phi(\phi,y)-\Phi(\phi,x), z(\phi)^\perp \right> \right| .$$ Since $\left|\frac{a'(\phi)}{a(\phi)}\right| < \frac{1}{(3+|c|) \, C_\epsilon}$ and $|z_2(\phi)-\Phi_2(\phi,x)-\Phi_2(\phi,y)+c|< 3 +|c|$, the absolute value of the right hand side of (\ref{E:ineq}) is bounded by 
$$ \frac{1}{(3+|c|) C_\epsilon} | \big|\Phi_2(\phi,x) -\Phi_2(\phi,y) \big|  \big|(z_2(\phi)-\Phi_2(\phi,x)-\Phi_2(\phi,y)+c) \big| < \big|\left<\Phi(\phi,y) - \Phi(\phi,x), z(\phi)^\perp \right>\big|.$$
This is a contradiction to (\ref{E:ineq}). We, hence, finish the proof for the second part of the Bolker condition. \end{proof}

We should remark, in passing, that the assumption in Lemma~\ref{L:Cond2} means the object does not change quickly (compared to the rotation of the receiver). However, this restriction is not significant, since for example, if $K \subset \mathbb{D}_{1/2}$ and $c=0$, then the requirement becomes
$$\left|\frac{a'(\phi)}{a(\phi)} \right| \leq \frac{1}{2 \sqrt{3}} \mbox{ for all } \phi \in (\alpha, \beta).$$

\begin{lemma}
Assume one of the following cases:
 \begin{itemize}
 \item[i)] $\beta - \alpha> 2 \pi$ 
 \item[ii)] $[\alpha,\beta] \subset [-\pi/2, 3 \pi/2]$ and
 \begin{equation} \label{E:above} \Phi_2(\phi, x) > \max \{\sin \alpha,\sin \beta\}, \quad \forall x \in K,~\phi \in (\alpha,\beta).\end{equation}
 \end{itemize}
 Then, condition (C) holds for the above vertical stretching model.
\end{lemma}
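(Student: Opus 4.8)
The plan is to reduce condition (C) to a statement about the angular range swept by a single planar vector as $\phi$ varies, and then to read that range off from the position of the object relative to the detector arc. First I would identify the normal. The curve $\mathcal{E}(\phi,r)$ is the level set $\{x:\tfrac12|z(\phi)-\Phi(\phi,x)|^2=\tfrac12 r^2\}$, so its unit normal at $x$ is $\pm N(\phi,x)/|N(\phi,x)|$, where, from the gradient computed in the proof of Lemma~\ref{L:Cond2},
$$N(\phi,x)=\big(\cos\phi-x_1,\ a(\phi)(\sin\phi-\Phi_2(\phi,x))\big).$$
For fixed $x$ exactly one curve, with $r=|z(\phi)-\Phi(\phi,x)|$, passes through $x$ for each $\phi$, so (C) is equivalent to $\{\pm N(\phi,x)/|N|:\phi\in(\alpha,\beta)\}=\mathbb{S}$. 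Since $\phi\mapsto N(\phi,x)/|N|$ has connected image, this holds as soon as that image is an arc of length $\ge\pi$ (its antipode then fills the circle). As $a(\phi)>0$, the vector $N$ lies in the same open quadrant as $v(\phi):=z(\phi)-\Phi(\phi,x)=(\cos\phi-x_1,\ \sin\phi-\Phi_2(\phi,x))$, and $N$ points along the positive (resp. negative) $x$-axis, or the positive $y$-axis, exactly when $v$ does. Hence it suffices to show that $\arg v$ attains the directions $0,\tfrac\pi2,\pi$, for then both $\arg v$ and $\arg N$ have range containing the interval $[0,\pi]$.

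For case (i) I would use a winding argument and no height information. By (A.2) and the choice of $\eps$ in Lemma~\ref{L:Cond2}, $\Phi(\phi,x)\in K\subset\mathbb{D}_{1-\eps}$, so the homotopy $H(s,\phi)=z(\phi)-s\,\Phi(\phi,x)$ never vanishes and $\arg v(\phi)=\phi+\delta(\phi)$ with $|\delta(\phi)|<\arcsin(1-\eps)<\tfrac\pi2$ (the disc of radius $1-\eps$ about $z(\phi)$ subtends an angle $<\pi$ at the origin). Therefore the net change of $\arg v$ over $(\alpha,\beta)$ exceeds $(\beta-\alpha)-\pi>2\pi-\pi=\pi$, and its range is an interval of length $>\pi$; the reduction above finishes this case.

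For case (ii) the arc length no longer suffices and the height hypothesis is essential. Here $v_1=\cos\phi-x_1$ depends on $\phi$ only, while the motion enters solely through $v_2=\sin\phi-\Phi_2(\phi,x)$. The hypothesis $\Phi_2(\phi,x)>\max\{\sin\alpha,\sin\beta\}$ gives $v_2<0$ at $\phi=\alpha,\beta$, i.e. $v$ points into the lower half-plane at both ends; geometrically it places the object strictly above the chord joining $z(\alpha)$ and $z(\beta)$. Since $\Phi_2<1$ while the swept arc carries the detector over the top of the circle (where $\sin\phi$ approaches $1$), there is an interior $\phi_0$ with $v_2(\phi_0)>0$, so $v$ also reaches the upper half-plane. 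I would then track the coordinate sign changes: $v_2$ runs negative $\to$ positive $\to$ negative, while $v_1=\cos\phi-x_1$ changes sign across the top; checking the sign of the remaining coordinate at each zero, the intermediate value theorem yields parameters at which $v$ points along the $+x$-axis, the $+y$-axis and the $-x$-axis. Thus $\arg v$ (hence $\arg N$) has range containing $\{0,\tfrac\pi2,\pi\}$, so it contains $[0,\pi]$, and with antipodes covers $\mathbb{S}$, establishing (C).

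The main obstacle is exactly the sweep estimate in case (ii), which is genuinely geometric rather than a mere length count: it rests on the object being enclosed by the over-the-top arc, and the only way $v_2$ can become positive is for the detector to rise above the object's height, which is what the interval position together with $\Phi_2<\max\{\sin\alpha,\sin\beta\}$ supplies. Making this rigorous requires (a) a brief case analysis on the sign of $\cos\phi-x_1$ at the endpoints — the horizontal placement of the object relative to the detectors — to ensure the three coordinate directions are attained in the correct cyclic order rather than revisited, and (b) the remark that the anisotropic rescaling by $a(\phi)$, although it distorts angles within each quadrant, cannot destroy the half-turn, since it fixes the coordinate half-axes where the crossings occur. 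If the slow-motion bound $\left|a'/a\right|\le[(3+|c|)\,C_\epsilon]^{-1}$ of Lemma~\ref{L:Cond2} is in force here as well, one can instead show directly that $\tfrac{d}{d\phi}\arg v=\big(1-\langle\Phi(\phi,x),z(\phi)\rangle-a'(\phi)(x_2-c)(\cos\phi-x_1)\big)/|v|^2>0$, making $\arg v$ monotone and streamlining step (a).
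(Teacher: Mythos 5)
Your case (ii) is essentially the paper's own proof. The paper works with the same normal direction $\nu_\phi=\big(\cos\phi-x_1,\ a(\phi)(\sin\phi-\Phi_2(\phi,x))\big)$, observes that (\ref{E:above}) forces the two endpoint rays ($\phi=\alpha$, $\phi=\beta$) into the closed lower half-plane, and concludes that the swept cone of normals is the upper solid angle, so that the normals together with their antipodes cover $\mathbb{S}$; your IVT/sign-tracking formulation, including the deferred case analysis (a), is that same argument. Note also that both you and the paper silently use that the detector actually passes over the object, i.e. $\arccos(x_1)\in(\alpha,\beta)$ (your ``the swept arc carries the detector over the top''). This does follow from (\ref{E:above}) together with $\Phi_1(\phi,x)=x_1$ and $\Phi(\phi,x)\in\mathbb{D}_{1-\eps}$ \emph{provided} $\alpha\le\pi/2\le\beta$, but not from hypothesis (ii) alone: for $[\alpha,\beta]=[3\pi/4,5\pi/4]$ and $K$ a small set near the top of the disc, (\ref{E:above}) holds while every normal points strictly downward and (C) fails. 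So this step deserves to be made explicit --- a remark that applies equally to the published proof.

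The genuine gap is in case (i), which the paper omits as ``easier.'' Your coverage criterion --- a connected arc of directions of length $\ge\pi$, together with its antipode, fills $\mathbb{S}$ --- is correct, but it must be applied to $N$, not to $v$: since the rescaling $N=(v_1,\,a(\phi)v_2)$ has a $\phi$-dependent factor, an arc of $v$-directions of length $>\pi$ does not yield an arc of $N$-directions of length $>\pi$; the only information that transfers is which coordinate half-axes are hit, which is exactly why your own reduction asks for the specific directions $0,\pi/2,\pi$. Your winding estimate, however, only shows that the lifted range of $\arg v$ is an interval of length $>(\beta-\alpha)-2\arcsin(1-\eps)>\pi$, and an interval of length barely over $\pi$ need not contain $\{0,\pi/2,\pi\}$ modulo $2\pi$, nor any antipodal pair of lattice directions: for instance $(\pi/4,\ 5\pi/4+s)$ contains only $\pi/2$ and $\pi$. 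From two perpendicular half-axis crossings all you can extract is that $N$ sweeps one closed quarter-circle, whose $\pm$-image covers only half of $\mathbb{S}$. So ``the reduction above finishes this case'' does not follow as written. The repair is immediate and needs no winding: since $\beta-\alpha>2\pi$, the interval $(\alpha,\beta)$ contains points $\phi_u,\phi_d$ with $\cos\phi_u=\cos\phi_d=x_1$ and $\sin\phi_u=-\sin\phi_d=\sqrt{1-x_1^2}$; because $\Phi_1(\phi,x)=x_1$ and $\Phi(\phi,x)\in\mathbb{D}_{1-\eps}$ give $|\Phi_2(\phi,x)|<\sqrt{1-x_1^2}$, the normal $N$ points straight up at $\phi_u$ and straight down at $\phi_d$, so the connected image of $\phi\mapsto N/|N|$ contains two antipodal directions, hence a closed half-circle, and (C) follows.
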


\begin{proof}
We only prove the theorem under case ii). The other case is easier. Fix $x \in K$, all ellipses passing through $x$ are of the form
$$\mathcal{E}_\phi= \{y: |z(\phi) -\Phi(\phi,x)| = |z(\phi) -\Phi(\phi,y)|\}, \quad \phi \in (\alpha,\beta).$$
The outward unit normal vector of $\mathcal{E}_\phi$ at $x$ has the same direction as
\begin{eqnarray*} \nu_\phi &=& \big(z_1(\phi)-\Phi_1(\phi, x), a(\phi) \, (z_2(\phi)-\Phi_2(\phi, x)) \big).\end{eqnarray*}
It now suffices to show that the conic set
$$V= \{ t \nu_\phi: t \geq 0, \phi \in (\alpha, \beta)\}$$ contains the (closed) upper half plane. Indeed, we observe that $V$ is the upper solid angle whose boundary is the union of two rays:
$$\{t(\cos \alpha - x_1, a(\alpha) (\sin \alpha -x_2) ): t \geq 0\}$$
and 
$$\{t(\cos \beta - x_1, a(\beta) (\sin \beta -x_2)): t \geq 0\}.$$
Since, see (\ref{E:above}), $$\Phi_2(\phi,x)>\max\{\sin(\alpha), \sin(\beta)\}, \quad \mbox{ for all } \phi \in (\alpha, \beta),$$ the above two rays are in the lower half plane. We hence conclude that $V$ contains the upper half plane. This finishes our proof. \end{proof}

\section{Uniqueness results} \label{S:unique}
In this section, we prove two uniqueness results for the continuous model. The first one is the uniqueness of Problem (P) when the deformation family $\Phi$ is known. This result holds for any general family of deformation satisfying conditions (A.1-3), (B), and (C). The second result is the injectivity of the linearized problem of finding $\Phi$ given the function $f$. This result is only proved for the vertical stretching model introduced in Section~\ref{Ss:stretching}. Our approach is based on the theory of analytic wave front set (see, e.g., \cite{Ho1}) and, interestingly, convex analysis. 

Our use of the analytic wave front set is inspired by several works in integral geometry, such as \cite{boman1987support,quinto1994radon,frigyik2008x,HomanZhou}. Let us now briefly recall some basic facts of that theory. The following definition of analytic wave front set follows from \cite[Definition~6.1]{sjostrand1985singularites} (which is in spirit of Bros-Iagolnitzer \cite{bros1975support}).
Let us denote by $\mathbb{T}^*\mathbb{R}^n \setminus 0$ the cotangent bundle of $\mathbb{R}^n$ excluding the zero section. For simplicity, it can be identify with $\mathbb{R}^n \times (\mathbb{R}^n \setminus 0)$. Let $(x_0,\xi_0) \in \mathbb{T}^* \mathbb{R}^n \setminus 0$ and $\psi(x,y,\xi)$ be an analytic function defined in a neighborhood $U$ of $(x_0,x_0,\xi_0) \in \mathbb{C}^{3n}$ such that
\begin{itemize}
\item[1)] For all $(x,x,\xi) \in U$, (i.e., $x=y$), we have
$\psi(x,x,\xi) = 0$ and $\partial_x \psi(x,x,\xi)= \xi$.
\item[2)] There exists $c>0$ such that for all $(x,y,\xi) \in U$, we have
$$\Im \, \psi(x,y,\xi) \geq c \, |x-y|^2.$$
Here, $\Im \, \psi$ denotes the imaginary part of $\psi$.
\end{itemize}

Let $a(x,y,\xi,\lambda)$ be an elliptic classical analytic symbol defined on $U$. We say that a distribution $u$ in $\mathbb{R}^n$ is analytic microlocally near $(x_0,\xi_0)$ if there exists a cut-off function $\chi \in C^\infty(\mathbb{R}^n)$ with $\chi(x_0)=1$ such that
$$\int e^{i \,\lambda \, \psi(x,y,\xi)} \, a(x,y,\xi,\lambda) \chi(y) \, \overline u(y) \, dy = \mathcal{O}(e^{-\lambda/C}).$$
for some $C>0$, uniformly in a conic neighborhood of  $(x_0,\xi_0)$.  The analytic wavefront set is the closed conic set $\wf_A(u) \subset \mathbb{T}^* \mathbb{R}^n$ which is the complement of the set of covectors near which $u$ is microlocally analytic.

\medskip

Let us state the first result of this section.
\begin{theorem}
	\label{thm:1}
Assume that the family of deformations $\Phi(\phi,.)$ is known and satisfies conditions (A.1-3), (B), and (C). Then, $f$ is uniquely determined from the data $\mathcal{R}(f_\phi)=\mathcal{R}_\Phi(f)$. 
\end{theorem}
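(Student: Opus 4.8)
The plan is to argue by contradiction using analytic microlocal regularity. Suppose $\mathcal{R}_\Phi(f)=0$; I would show that $\wf_A(f)$ contains no covector based in $\mathcal{O}$, conclude that $f$ is real-analytic on $\mathcal{O}$, and then invoke an analytic-continuation argument on $\supp(f)$ to force $f\equiv 0$. Uniqueness follows by linearity.

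The first step is to view $\mathcal{R}_\Phi$ as an elliptic analytic Fourier integral operator and to use the Bolker condition to transfer analytic regularity. By (A.3) the phase $I(x,\phi,r)=\tfrac12(|z(\phi)-\Phi(\phi,x)|^2-r^2)$ is analytic, and the weight $w(\phi,x)$ is analytic and nonvanishing: the Jacobian factor $J_x[\Phi]$ is positive by the bijectivity in (A.1), while the denominator is bounded away from zero by the immersion in (B). Hence $\mathcal{R}_\Phi$ carries an elliptic classical analytic symbol, and its canonical relation $\mathcal{C}$ is a local canonical graph precisely because (A.5) (equivalently (B)) makes the left projection $\pi_L$ an injective immersion. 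Under this hypothesis the analytic-microlocal regularity theorem for elliptic analytic FIOs applies: if $\mathcal{R}_\Phi f$ is microlocally analytic at a data covector lying in the range of $\pi_L$, then $f$ is microlocally analytic at the corresponding covector obtained via $\pi_R\circ\pi_L^{-1}$.

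The second step exploits visibility. Fix any $x_0\in\mathcal{O}$ and any unit codirection $\xi_0$. Since $x_0\in\mathcal{O}$ forces $\Phi(\phi,x_0)\in K\subset\mathbb{D}_{1-\eps}$ while $z(\phi)\in\mathbb{S}$, condition (C) provides an interior $\phi_0\in(\alpha,\beta)$ and a radius $r_0=|z(\phi_0)-\Phi(\phi_0,x_0)|\ge\eps>0$ such that the analytic curve $\mathcal{E}(\phi_0,r_0)$ passes through $x_0$ with unit conormal $\pm\xi_0$ there; equivalently $(x_0,\xi_0)$ lies in the range of $\pi_R$. Because $\mathcal{R}_\Phi f\equiv 0$ is trivially analytic near the interior data point $(\phi_0,r_0)$, the transfer statement of the first step yields $(x_0,\xi_0)\notin\wf_A(f)$. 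As $x_0\in\mathcal{O}$ and $\xi_0$ are arbitrary, $\wf_A(f)$ has no elements based in $\mathcal{O}$, so $f$ is real-analytic on $\mathcal{O}$.

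A support argument then finishes. By (A.2), $\supp(f)\subset\mathcal{O}$ is compact, hence a proper subset of $\mathbb{R}^2$; if $f\not\equiv 0$ its support is nonempty with nonempty boundary. Choosing $x_*\in\partial(\supp f)\subset\mathcal{O}$ and a ball $B\subset\mathcal{O}$ about $x_*$, the function $f$ is analytic on $B$ and vanishes on the nonempty open set $B\setminus\supp(f)$, so by the identity theorem $f\equiv 0$ on $B$, contradicting $x_*\in\partial(\supp f)$. Thus $\supp(f)=\emptyset$ and $f\equiv 0$. I expect the genuine obstacle to be the first step: making rigorous, for the specific phase and weight at hand, that the Bolker condition (B) indeed yields the analytic-microlocal transfer (in particular the ellipticity of the symbol and the injective-canonical-graph structure of $\mathcal{C}$). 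The visibility and support steps are comparatively routine once this analytic input is in place.
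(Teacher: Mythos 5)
Your proposal is correct and takes essentially the same route as the paper's proof: treat $\mathcal{R}_\Phi$ as a Radon transform with positive analytic weight satisfying the Bolker condition, use the analytic-microlocal regularity result to conclude that covectors conormal to the curves $\mathcal{E}(\phi,r)$ are absent from $\wf_A(f)$, invoke the visibility condition (C) to get $\wf_A(f)\cap\mathbb{T}^*\mathcal{O}=\emptyset$, and finish by analyticity plus compact support. The one step you flag as the genuine obstacle---rigorously establishing the microlocal transfer under (B)---is exactly the step the paper does not reprove either, citing instead \cite[Proposition~3.1]{quinto1994radon} and \cite[Proposition 1]{HomanZhou}.
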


\begin{proof} Let us recall that
$$\mathcal{R}_\Phi(f)(\phi,r)= \int_{\mathcal{E}(\phi,r)} w(\phi,x) \, f(x)  \,d \sigma(x),$$ where $$w(\phi,x)=\frac{J_x[\Phi(\phi,x)]  \, |z(\phi) -\Phi(\phi,x)|}{|\left< z(\phi) - \Phi(\phi,x), \nabla_x \Phi(\phi,x) \right> |}>0$$ is an analytic function. Then, $\mathcal{R}_\Phi$ is a Radon transform satisfying the Bolker condition, which has been investigated in \cite{quinto1994radon,HomanZhou}. Assume that $\mathcal{R}_\Phi(f)(\phi,r)=0$ for all $(\phi,r) \in (\alpha,\beta) \times \mathbb{R}_+$. Then, (see, \cite[Proposition~3.1]{quinto1994radon} or \cite[Proposition 1]{HomanZhou}), $$(x,\xi) \not \in \wf_A(f) \mbox{ for all } (x,\xi) \mbox{ being conormal to a curve } \mathcal{E}(\phi,r).$$ 
From condition (C), we obtain $(x,\xi) \not \in \wf_A(f)$ for all $(x,\xi) \in \mathbb{T}^*\mathcal{O}$. That is, $f$ is an analytic function (see, e.g., \cite[Theorem~8.4.5]{Ho1}). Since $f$ is compactly supported, it has to be the zero function. 
\end{proof}

\medskip

We now study the linearization of Problem (P) with respect to the motion. We recall from (\ref{E:data}) that \begin{eqnarray*}  \mathcal{R}_\Phi(f)(\phi,r) = \mathcal{R}(f_\phi)(\phi,r)  &=& \int_{\mathbb{R}} \int_{\mathbb{R}^2} e^{i \lambda (|z(\phi) - \Phi(\phi,x)|-r )} J_x[\Phi(\phi,x)] \, f(x) \, dx \, d \lambda .
\end{eqnarray*}

Let us linearize the operator $\mathcal{R}_\Phi$ with respect to $\Phi$ at the background $\Phi^*$ along the direction $\bd=\bd(\phi,x)$ as follows,
\begin{eqnarray*} D_f(\bd)(\phi,r) &=& \frac{d}{d\epsilon} \int\limits_{\mathbb{R}}\int\limits_{\mathbb{R}^2} e^{i(|z(\phi) -(\Phi^*(\phi,x)+\epsilon \delta(\phi,x))| -r ) \lambda} J_x (\Phi^*(\phi,x) + \epsilon  \bd(\phi,x)) \, f(x) \, dx \, d\lambda \Big|_{\epsilon=0}.
\end{eqnarray*}
That is,
\begin{eqnarray}\label{E:Lin} D_f(\bd)(\phi,r)  &=& \int\limits_{\mathbb{R}} \int\limits_{\mathbb{R}^2} e^{i(|z(\phi) -\Phi^*(\phi,x) | - r) \lambda} \, p(z,x,\lambda) \, f(x) \, dx \, d\lambda,
\end{eqnarray}
where
$$p(z,x, \lambda) =  i \lambda \frac{\left<z(\phi) -\Phi^*(\phi,x),  \bd(\phi,x) \right>}{|z(\phi) -\Phi^*(\phi,x)|} \, J_x[\Phi^*(\phi,x)]+ \frac{d}{d \epsilon}J_x (\Phi^*(\phi,x) + \epsilon \bd(\phi,x)) \big|_{\epsilon=0}.$$

Let us consider the vertical stretching model introduced in Section~\ref{Ss:stretching}:
$$\Phi(\phi,x) = (x_1, c+ a(\phi) (x_2-c) ).$$ Then, we can assume
$$\Phi^*(\phi,x) = (x_1, c+ a^*(\phi) (x_2-c) ),$$
and 
$$\bd (\phi,x) = (0, d(\phi) \, (x_2-c)),$$ where $d(\phi) =a(\phi) - a^*(\phi)$.
Let us denote $$\nu_2(\phi,x) =  \frac{z_2(\phi)-\Phi^*_2(\phi,x)}{|z(\phi) -\Phi^*(\phi,x)|}.$$
Then, direct calculations show
$$p(z,x, \lambda) =  \big [ i \lambda \nu_2(\phi,x) \,a^*(\phi) (x_2-c) + 1\big] d(\phi).$$
From equation (\ref{E:Lin}), we obtain
\begin{eqnarray*} D_f (\bd)(\phi,r)  &=& d(\phi) \, \mathcal{T}(f)(\phi,r),
\end{eqnarray*}
where
\begin{eqnarray*}\label{E:zero} \mathcal{T}(f)(\phi,r) = \int\limits_{\mathbb{R}} \int\limits_{\mathbb{R}^2} e^{i(|z(\phi) -\Phi^*(\phi,x)|-r) \lambda} \, \big[ i \lambda \nu_2(\phi,x)  \,a^*(\phi) \,  (x_2-c) +1 \big] \,  f(x) \, dx \, d\lambda.
\end{eqnarray*}
Let us observe that $\mathcal{T}$ is a geometric integral transform on the family of curves
$$\mathcal{E}_*(\phi,r) = \{x \in \mathbb{R}^2: |z(\phi) - \Phi^*(\phi,x) |=r \}.$$
Unlike $\mathcal{R}_\Phi(f) $, $\mathcal{T}$ not only involves the function $f$ but also its first derivative (since the amplitude function is of order $1$). Moreover, it also involves vanishing weight. 

To analyze $D_f$, we will also assume that $\Phi^*$ satisfies conditions (A.1-3), (B), and (C). We also assume that $a(\phi)$ is analytic. We will need the following result
\begin{lemma} \label{L:ellip}  Let $(x_0,z_0,r_0)$ be such that $$(x_{0,2}-c)  \, \nu_2(\phi_0,x_0) \neq 0$$ and $\mT(f)(\phi,r) =0$ in a neighborhood of $(z_0,r_0)$. Then,
 $$(x_0, \xi_0) \not \in \wf_A(f),$$ if $(x_0,\xi_0)$ is conormal to $\mathcal{E}_*(z_0,r_0).$
\end{lemma}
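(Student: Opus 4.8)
The plan is to recognize $\mathcal{T}$ as an analytic Fourier integral operator and to run the same microlocal analytic-regularity argument that underlies Theorem~\ref{thm:1}, while accounting for two new features: the amplitude of $\mathcal{T}$ is of order one in $\lambda$ rather than order zero, and it carries the weight $\nu_2(\phi,x)\,a^*(\phi)\,(x_2-c)$, which may vanish. First I would note that $\mathcal{T}$ and $\mathcal{R}_{\Phi^*}$ share the same phase function $\big(|z(\phi)-\Phi^*(\phi,x)|-r\big)\lambda$, and therefore the same incidence relation and canonical relation $\mathcal{C}$; only the amplitude differs. Since $\Phi^*$ is assumed to satisfy (A.1-3), (B), and (C), Bolker's condition makes $\pi_L$ an injective immersion, so $\mathcal{C}$ is a local canonical graph and $\mathcal{T}$ belongs to the class of analytic elliptic Fourier integral operators whose analytic-microlocal regularity is treated in \cite{quinto1994radon,HomanZhou} (in the spirit of \cite{sjostrand1985singularites,Ho1}).

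The crux is the principal symbol of $\mathcal{T}$. Its amplitude $p(z,x,\lambda)=i\lambda\,\nu_2(\phi,x)\,a^*(\phi)\,(x_2-c)+1$ is a classical analytic symbol of order one whose leading part is $i\lambda\,\nu_2(\phi,x)\,a^*(\phi)\,(x_2-c)$. Because $a^*(\phi)>0$, this leading part is nonzero at the covector lying over $(\phi_0,x_0)$ exactly when $(x_{0,2}-c)\,\nu_2(\phi_0,x_0)\neq0$, which is precisely the hypothesis. Hence $\mathcal{T}$ is microlocally elliptic at the covector $(\phi_0,r_0;\eta_0)$ over the data space that $\mathcal{C}$ relates to the conormal $(x_0,\xi_0)$ of $\mathcal{E}_*(\phi_0,r_0)$ (well defined because $\mathcal{C}$ is a canonical graph). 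Neither the order-one growth in $\lambda$ nor the additive lower-order term $1$ interferes: ellipticity is governed solely by the nonvanishing of the leading symbol, and an elliptic analytic Fourier integral operator associated to a canonical graph admits a microlocal analytic parametrix regardless of its order.

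With ellipticity established at this covector, I would apply the microlocal analytic-regularity result for such operators (the analogue of \cite[Proposition~3.1]{quinto1994radon} and \cite[Proposition~1]{HomanZhou}). As $\mathcal{T}(f)$ vanishes, hence is analytic, in a neighborhood of $(z_0,r_0)$, the covector $(\phi_0,r_0;\eta_0)$ is not in $\wf_A(\mathcal{T} f)$; transporting this back along the canonical graph and using ellipticity at this covector yields $(x_0,\xi_0)\notin\wf_A(f)$. The step I expect to be the main obstacle — and the reason for the hypothesis — is the vanishing weight: on the locus $(x_2-c)\,\nu_2(\phi,x)=0$ the leading symbol drops out and $\mathcal{T}$ degenerates to the order-zero term, so it is no longer elliptic as an order-one operator and no analyticity can be propagated there. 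Geometrically this locus consists of points on the base line $x_2=c$, where the linearized stretch $\bd=(0,d(\phi)(x_2-c))$ vanishes, together with points where $\nu_2=0$, i.e., where the conormal direction is orthogonal to the stretching; in both cases the linearized motion does not influence the data at the relevant covector, and the assumption $(x_{0,2}-c)\,\nu_2(\phi_0,x_0)\neq0$ is exactly what keeps us away from this degeneracy.
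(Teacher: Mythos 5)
Your proposal is correct and takes essentially the same route as the paper: the paper omits the proof of Lemma~\ref{L:ellip}, saying only that it follows closely the arguments of \cite{quinto1994radon,frigyik2008x,HomanZhou}, and that is exactly what you reconstruct --- same phase and canonical relation as $\mathcal{R}_{\Phi^*}$ under Bolker's condition, microlocal ellipticity of the order-one amplitude precisely where $(x_{0,2}-c)\,\nu_2(\phi_0,x_0)\neq 0$, and then the analytic microlocal regularity result transported along the canonical graph. Your closing observation that the hypothesis exists to avoid the vanishing-weight degeneracy likewise matches the paper's own remark that $\mathcal{T}$ ``involves vanishing weight,'' so there is nothing to add.
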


The proof of Lemma~\ref{L:ellip} follows closely the arguments in \cite{quinto1994radon,frigyik2008x,HomanZhou} and is omitted for brevity. Let us now state the second main result of this section.


\begin{theorem} \label{T:Linearized}
Consider the vertical stretching model. Assume $\supp(f)$ lies in the open upper half plane of the line $x=c$ and its boundary does not contain any vertical line segments. Assume further that $\{\Psi^*(\phi,z(\phi)): \phi \in (\alpha,\beta)\}$ does not contain any horizontal line segments. Then, $D_f$ is injective.
\end{theorem}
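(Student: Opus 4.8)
The plan is to reduce the injectivity of $D_f$ to a single nonvanishing statement about $\mathcal{T}(f)$, prove that statement by a microlocal argument in the spirit of Theorem~\ref{thm:1}, and close it off by a convexity argument that is where the two geometric nondegeneracy hypotheses are spent.

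First I would set up the reduction. In the vertical stretching parametrization a perturbation direction $\bd$ is encoded by the single function $d(\phi)=a(\phi)-a^*(\phi)$, which is analytic on $(\alpha,\beta)$ since both $a$ and $a^*$ are analytic. By the identity $D_f(\bd)(\phi,r)=d(\phi)\,\mathcal{T}(f)(\phi,r)$, the vanishing $D_f(\bd)\equiv 0$ forces, for each $\phi$, either $d(\phi)=0$ or $\mathcal{T}(f)(\phi,\cdot)\equiv 0$. Hence it suffices to prove $\mathcal{T}(f)\not\equiv 0$: if $\mathcal{T}(f)(\phi_1,r_1)\neq 0$ at some point, then $\mathcal{T}(f)(\phi,\cdot)\not\equiv 0$ for all $\phi$ in an open subinterval, so $d$ vanishes on that subinterval, and since $d$ is analytic on the connected interval $(\alpha,\beta)$ we get $d\equiv 0$, i.e. $\bd=0$. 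Here I use $f\neq 0$, which is implicit in the assumption that $\supp(f)$ has a boundary.

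Next I would argue by contradiction, assuming $\mathcal{T}(f)\equiv 0$, and constrain $\wf_A(f)$. Fix $x_0\in\supp(f)$. By the hypothesis $\supp(f)\subset\{x_2>c\}$ we have $x_{0,2}-c>0$, so the ellipticity condition $(x_{0,2}-c)\,\nu_2(\phi_0,x_0)\neq 0$ in Lemma~\ref{L:ellip} reduces to $\nu_2(\phi_0,x_0)\neq 0$, i.e. to the conormal direction of $\mathcal{E}_*(\phi_0,r_0)$ at $x_0$ being non-horizontal. Since $\Phi^*$ satisfies the visibility condition (C), as $\phi_0$ ranges over $(\alpha,\beta)$ the conormal directions to the curves $\mathcal{E}_*(\phi_0,\cdot)$ through $x_0$ exhaust every direction, and the only covectors not reached under the constraint $\nu_2\neq 0$ are the horizontal ones, which are precisely those lost at $\nu_2=0$. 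Applying Lemma~\ref{L:ellip} at each admissible $(\phi_0,r_0)$ then gives $(x_0,\xi_0)\notin\wf_A(f)$ for every non-horizontal covector $\xi_0$, so that $\wf_A(f)\subseteq\{(x,\xi):\xi_2=0\}$.

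Finally I would derive a contradiction from $\wf_A(f)\subseteq\{\xi_2=0\}$ together with $f\neq 0$ of compact support, and this is the step I expect to be the main obstacle. Since a nonzero compactly supported distribution cannot be real-analytic, $\wf_A(f)\neq\emptyset$, and I want to exhibit in it a covector with nonzero second component. The convex-analytic idea is to examine supporting lines of $\supp(f)$ in non-horizontal directions and locate an exposed boundary point $x_*$ at which $f$ is genuinely singular in the (non-horizontal) outer normal direction: the hypothesis that $\partial\supp(f)$ contains no vertical line segments keeps the relevant supporting normals away from the horizontal direction, while the hypothesis that the center curve $\{\Psi^*(\phi,z(\phi))\}$ contains no horizontal line segments is what prevents the degenerate locus $\nu_2=0$ of the family $\mathcal{E}_*$ from aligning with a boundary arc and spuriously cancelling the singularity there. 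The delicate point is the passage from \emph{exposed supporting direction} to \emph{covector actually present in} $\wf_A(f)$, since exposedness alone is not enough (for instance, the upward direction is exposed for $\partial_{x_1}\delta_0$ yet $(0,(0,1))\notin\wf_A(\partial_{x_1}\delta_0)$); this requires a careful analytic-microlocal convexity lemma, and it is exactly there that conditions (a) and (b) are invoked to exclude the pathological cancellations and produce the contradictory non-horizontal element of $\wf_A(f)$.
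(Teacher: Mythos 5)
Your factorization $D_f(\bd)(\phi,r)=d(\phi)\,\mathcal{T}(f)(\phi,r)$ is the right starting point, and your treatment of the case of \emph{global} vanishing of $\mathcal{T}(f)$ is essentially sound: if $\mathcal{T}(f)\equiv 0$ on all of $(\alpha,\beta)\times\mathbb{R}_+$, then condition (C) together with Lemma~\ref{L:ellip} (using $\supp(f)\subset\{x_2>c\}$) confines $\wf_A(f)$ to horizontal covectors. But note that the ``main obstacle'' you anticipate in your last paragraph does not exist: the passage from a supporting direction to a covector in the analytic wave front set is exactly the content of (\ref{E:Hormander}). Taking $x^*\in\supp(f)$ maximizing $x_2$ gives $(x^*,(0,1))\in\mathbb{N}(\supp (f))\subset \wf_A(f)$, which is the desired non-horizontal covector, with no convexity lemma needed. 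Your purported counterexample is false: $\wf_A(\partial_{x_1}\delta_0)=\{0\}\times(\mathbb{R}^2\setminus 0)$, since $\mathbb{N}(\{0\})$ already consists of \emph{all} covectors over the origin and (\ref{E:Hormander}) applies. Consequently your plan would spend the two geometric hypotheses of the theorem on a step that needs neither of them --- a sign that the real difficulty lies elsewhere.

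The genuine gap is in your reduction to global vanishing. Distributionally, $d\cdot\mathcal{T}(f)\equiv 0$ only yields that $\mathcal{T}(f)$ vanishes on the open cylinder $\{d\neq 0\}\times\mathbb{R}_+$; your step from ``$\mathcal{T}(f)(\phi_1,r_1)\neq 0$ at a point'' to ``$d$ vanishes on an open subinterval'' requires pointwise evaluation and continuity of $\mathcal{T}(f)$, which is unjustified. As the paper notes, $\mathcal{T}$ has an amplitude of order one (it effectively differentiates along $r$), so $\mathcal{T}(f)$ need not be continuous even for $f=\chi_\Omega$ --- the natural class in imaging --- let alone for distributional $f$, for which the paper's proof is written. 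What must therefore be excluded is the vanishing of $\mathcal{T}(f)$ on a cylinder $\mathcal{A}\times\mathbb{R}_+$ over an arbitrarily small open interval $\mathcal{A}\subset\{d\neq 0\}$. In that regime your argument collapses: for fixed $x_0$, each $\phi$ contributes exactly one ellipse $\mathcal{E}_*(\phi,r)$ through $x_0$, so as $\phi$ ranges over a small $\mathcal{A}$ the available conormal directions sweep only a small sub-cone of the circle, the visibility condition (C) is unavailable, and Lemma~\ref{L:ellip} cannot confine $\wf_A(f)$ to horizontal covectors. This local case is precisely where the paper's proof operates: for each $\phi\in\mathcal{A}$ only the first ellipse touching $\supp(f)$ is used, Lemma~\ref{L:ellip} forces the touching point $x_\phi$ to lie at the height of the center $\Psi^*(\phi,z(\phi))$, and the two hypotheses (no horizontal segments in the center curve, no vertical segments in $\partial\supp(f)$) are what allow the construction of the function $h$, its convexification $h^*$, and the application of Kashiwara's watermelon theorem to reach a contradiction. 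Your proposal never engages with this case, so as it stands it does not prove the theorem.
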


In order to prove this theorem, we will need some basic knowledge of convex analysis. Let us recall that a function $h$ defined in an interval $\mathbb{I}$ is convex if 
$$h(\alpha \tau + (1-\tau)\tau') \leq  \alpha \, h(\tau) + (1-\tau) \, h(\tau'), \mbox{ for all } 0 \leq \alpha \leq 1, ~\tau, \tau' \in \mathbb{I}.$$
For each $\tau_0$ in the interior of $\mathbb{I}$, a convex function $h$ admits a finite left-derivative and a finite right-derivative (see, e.g., \cite[Theorem~4.1.1]{hiriart1993convex}):
\begin{eqnarray*} 
D_{-}h(\tau) = \lim_{\tau \to \tau_0^-} \frac{h(\tau) - h(\tau_0)}{\tau -\tau_0}, \\
D_{+}h(\tau) = \lim_{\tau \to \tau_0^+} \frac{h(\tau) - h(\tau_0)}{\tau -\tau_0},
\end{eqnarray*}
respectively that satisfy
$$D_{-}h(\tau_0) \leq D_{+} h(\tau_0). $$
The set $$\partial h(\tau_0) = [D_{-}h(\tau_0) , D_{+} h(\tau_0)]$$ is called the subdifferential of $h$ at $\tau_0$. In particular, if $\partial h(\tau_0)$ is a singleton then $h$ is differentiable at $\tau_0$. 

Furthermore, we will need some results from microlocal analysis of analytic wave front set. Let $F$ be a closed subset of $\mathbb{R}^n$. The exterior conormal $\mathbb{N}_e(F)$ is defined to be the set of all $(x_0,\xi_0)$ such that $x_0 \in F$ and there exists a function $f \in C^2(\mathbb{R}^n)$ with $\nabla f(x_0)=\xi_0 \neq 0$ and 
$$f(x) \leq f(x_0), \mbox{ when } x \in F \cap U,$$ where $U$ is some open neighborhood of $x_0$. We also denote 
$$\mathbb{N}(F) = \mathbb{N}_e(F) \cup ( - \mathbb{N}_e(F)). $$
We will make use of the following result (see, e.g., \cite[Theorem~8.5.6']{Ho1})
\begin{equation} \label{E:Hormander} \mathbb{N}( \supp\,  u) \subset \wf_A(u)\,,\end{equation}
and the Kashiwara's Watermelon theorem (see, e.g., \cite[Theorem~2.1]{hormander1993remarks}), stated as follows.

\begin{theorem}
Let $u$ be a distribution in $\mathbb{R}^n$ and $(x_0,\xi_0) \in \mathbb{N}(\supp\, u)$, then
\begin{equation} (x_0,\xi) \in \wf_A(u)  \Rightarrow (x_0, \xi + t \xi_0) \in \wf_A(u), \mbox{ for all } t \in \mathbb{R} \mbox{ with }  \xi + t \xi_0 \neq 0.
\end{equation}
\end{theorem}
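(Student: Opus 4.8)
The statement is Kashiwara's watermelon theorem, and since the paper only cites it, my plan is to prove it using the Fourier--Bros--Iagolnitzer (FBI) machinery already fixed by the definition of $\wf_A$ in the excerpt, following the approach of H\"ormander. Throughout I normalize $x_0 = 0$ and, after a rotation, take $\xi_0 = e_n$. The first step is to convert the hypothesis $(0,\xi_0)\in\mathbb{N}(\supp u)$ into a half-space support condition. By definition this hypothesis supplies a $C^2$ function whose level set touches $\supp u$ from one side at $0$ with gradient $\pm\xi_0$; because the FBI phase $\psi$ in the paper's definition carries a quadratic term with $\Im\psi\ge c\,|x-y|^2$, the curvature of that touching hypersurface is dominated by this term in a small neighborhood, so for the microlocal analysis at $0$ I may replace $\supp u$ by the half-space $\{y:\langle y,e_n\rangle\le 0\}$.

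The decisive step is to complexify the covector in the $\xi_0$-direction. I evaluate the FBI transform at $x=0$ along the line of directions $\xi + t\,e_n$ and let $t$ become complex, $\tau = t + is$. Inspecting the exponent of the integrand, the only new contribution produced by $\Im\tau = s$ is a real term proportional to $\lambda\,s\,\langle y,e_n\rangle$; on the half-space support one has $\langle y,e_n\rangle\le 0$, so for $s\ge 0$ this term is $\le 0$ and does not spoil the Gaussian smallness coming from $\Im\psi$. Hence the transform $G(\tau,\lambda)$ extends to a function holomorphic in $\tau$ on $\{\Im\tau>0\}$, bounded there polynomially in $\lambda$ and continuous up to the real axis. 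I then introduce the decay-rate function
\[
v(\tau) \;=\; \limsup_{\lambda\to\infty}\frac{1}{\lambda}\log\big|G(\tau,\lambda)\big|,
\]
which after upper-semicontinuous regularization is subharmonic, satisfies $v\le 0$ on the closed upper half-plane, and obeys the dictionary $v(t)<0 \iff (0,\xi+t\,e_n)\notin\wf_A(u)$.

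The conclusion then reduces to a rigidity statement for $v$ on the real axis. Using the holomorphy and the uniform half-plane bound, a three-lines / Phragm\'en--Lindel\"of argument (in the form of Kiselman's minimum principle) shows that the restriction $t\mapsto v(t)$ is convex. A convex function that is bounded above on all of $\mathbb{R}$ must be constant; since the hypothesis $(0,\xi)\in\wf_A(u)$ says exactly $v(0)=0$, convexity and $v\le 0$ force $v\equiv 0$ on $\mathbb{R}$. Equivalently, the set of directions on the line that are absent from $\wf_A(u)$ is empty, which is precisely $(0,\xi+t\,e_n)\in\wf_A(u)$ for every $t$ with $\xi + t\,e_n\neq 0$.

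I expect the convexity of $v$ to be the main obstacle: passing from ``$G$ holomorphic and polynomially bounded in a half-plane'' to a genuine convexity/propagation statement for its exponential decay rate is the analytic heart of the watermelon theorem. Two technical points demand care. First, the frequency normalization: as $t\to\pm\infty$ the direction $\xi+t\,e_n$ approaches $\pm e_n$ and its magnitude scales the FBI Gaussian, so the complexification must be organized so that the bounds survive uniformly out to the ends of the line. Second, the reduction of the curved touching hypersurface to an exact half-space must be controlled against the quadratic term of $\psi$ rather than merely invoked. Both are handled in H\"ormander's treatment, which I would follow in the details.
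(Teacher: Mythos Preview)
The paper does not prove this theorem at all: it is quoted verbatim as Kashiwara's watermelon theorem with a citation to \cite{hormander1993remarks}, and is then used as a black box inside the proof of Theorem~\ref{T:Linearized}. There is therefore no proof in the paper to compare your attempt against.

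That said, your sketch is a faithful outline of the standard FBI/Sj\"ostrand--H\"ormander argument and is correct at the level of strategy. The two technical points you single out are exactly the right ones. For the first, the reduction from a $C^2$ touching hypersurface to an honest half-space is not free: one absorbs the quadratic part of the touching function into the FBI phase (which the definition in the paper allows, since any analytic $\psi$ with the stated positivity of $\Im\psi$ is admissible), and this is where the $C^2$ regularity in the definition of $\mathbb{N}_e$ is actually used. For the second, your ``convex and bounded above on $\mathbb{R}$ implies constant'' step is correct in spirit but requires that the decay-rate function $v$ be defined and controlled uniformly along the whole real line $\xi+t\,e_n$, including the conic rescaling as $|t|\to\infty$; in H\"ormander's version this is handled by working with the normalized direction and tracking the $\lambda$-scaling carefully, and the propagation is often phrased locally (an open interval of microlocal analyticity propagates) rather than via a global convexity statement. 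Either formulation closes the argument, but the global version needs the uniformity you flagged.
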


%

We are now ready to prove Theorem~\ref{T:Linearized}.

\begin{proof} Let us assume, by contradiction, that there exists  $d \not \equiv 0$ such that $D_f(0,d)(\phi,r) \equiv 0$.  Then, there exists an open interval $\mathcal{A} \subset (\alpha, \beta)$ such that  $d(\phi) \not =0$ for all $\phi \in \mathcal{A}$. We obtain
$$\mathcal{T}(f)(\phi,r) = 0, \mbox{ for all } \phi \in \mathcal{A} \mbox{ and } r >0. $$
Let us recall 
$$\mathcal{E}_*(\phi,r) = \{x: |z(\phi)  -\Phi(\phi,x)|=r\}.$$ 
For each $\phi \in \mathcal{A}$, let $r_\phi>0$ be the smallest value $r$ such that $\mathcal{E}_*(\phi,r)$ intersects $\supp(f)$. Let $x_\phi$ be such an intersection point and $\xi_\phi$ be orthogonal to $\mathcal{E}_*(\phi,r_\phi)$ at $x_\phi$. Then, $(x_\phi,\xi_\phi) \in \mathbb{N}(\supp (f))$. 
Using (\ref{E:Hormander}), we obtain $$(x_\phi,\xi_\phi) \in \wf_A(f).$$
Due to Lemma~\ref{L:ellip} and $x_{\phi,2} > c$, $$\nu_2(\phi,x_\phi) = 0. $$
From the definition of $\nu(\phi,x_\phi)$, we get $$x_{\phi,2} =\Psi^*_2(\phi,z(\phi)) .$$
On the other hand, $\mathcal{E}_*(\phi,r_\phi) $ is an ellipse centered at $\Psi^*(\phi,z(\phi))$. Therefore, the normal vector of $\mathcal{E}_*(\phi,r_\phi)$ at $x_\phi$ is horizontal.  That is, $\xi_\phi$ is a horizontal vector and $x_\phi$ is unique (since it belongs to the same horizontal line as $\Psi^*(\phi,z(\phi))$).

Due to the assumption that the curve $\{\Psi^*(\phi,z(\phi)): \phi \in (\alpha,\beta)\}$ does not contain any piece of horizontal straight line, by shrinking $\mathcal{A}$ if necessary, we can assume 
$$\Psi^*_2(\phi,z(\phi)) \neq \Psi^*_2(\phi',z(\phi'))$$ for all $\phi \neq \phi'$ belonging to $\mathcal{A}$. 
Let us denote $$\mathbb{I}=\{x_{\phi,2}=  \Psi^*_2(\phi,z): \phi \in \mathcal{A}\}.$$ 
Then $\mathbb{I}$ is a nonempty open interval and the following function is well-defined
\begin{eqnarray*} && h: \mathbb{I} \longrightarrow \mathbb{R},\\[6 pt] && x_{\phi,2} \longmapsto  x_{\phi,1}.\end{eqnarray*} 
Let $\mS$ be the swapping operator $\mS(x_1,x_2) = (x_2,x_1)$. Then, $$(\tau,h(\tau)) \in \mS[\partial \supp(f)], \mbox{ for all } \tau \in \mathbb{I}.$$ 
That is, $\graph(h) \subset \mS[\partial \supp(f)]$. We now prove that $h$ is a constant function. This implies $\partial \supp(f)$ contains a vertical straight line segment, which is a contradiction to our assumption and the theorem is proved. To proceed with the proof, let us notice that for each $\phi \in \mathcal{A}$, $(x_\phi,\xi_\phi) \in \mathbb{N}(\supp(f))$. If $h$ is differentiable then $\mS(\xi_\phi)$ is orthogonal to the tangent line of the graph of $h$. Since $\mS(\xi_\phi)$ is vertical for all $\phi \in \mathcal{A}$, we conclude that $h$ is constant on $\mI$. It, therefore, now suffices to prove that $h$ is differentiable. Our approach is to ``convexify" the function $h$ in order to make use of the theory of convex analysis.  
\medskip

Let $\tau_0 \in \mI$, we now prove the following claim: there exist $\epsilon>0$ such that $$h^*(\tau) := h(\tau) + \frac{1}{\epsilon} \, \tau^2$$ is convex on $\mI_{\tau_0,\epsilon} = (\tau_0 -\epsilon, \tau_0+\epsilon)$. Indeed, we recall that for each $\tau \in \mI$, the ellipse $\mS[\mE_*(\phi,r_\phi)]$ does not intersect the interior of the epigraph of $h$. For $\epsilon$ small enough and $\tau \in \mI_{\tau_0,\epsilon}$, there is a function $h_\tau: \mI_{\tau_0,\epsilon} \to \mathbb{R}$ such that: 
\begin{itemize} 
\item[i)] $\graph(h_\tau) \subset \mS[\mE_*(\phi,r_\phi)]$, and 
\item[ii)]  $h_\tau''(s) \geq -\frac{1}{\epsilon}$ on $\mI_{\tau_0,\epsilon}$. 
\end{itemize}
We define $h_\tau^*(s) = h_\tau(s) + \frac{1}{\epsilon} s^2$, then $h^*_\tau$ is convex on $\mI_{\tau_0,\epsilon}$. It is easy to see that
$$h^*=\sup_{\tau \in \mI_{\tau_0,\epsilon}} h_\tau^*.$$ Therefore, $h^*$ is a convex function on $\mI_{\tau_0,\epsilon}$. 

\medskip

Let us now prove that $h^*$ is differentiable at $\tau_0$, or equivalently, $\partial h^*(\tau_0)$ is a singleton. To this end, let us define  
\begin{equation*}
	F(x) =\left(x_1,x_2- \frac{1}{\epsilon} x_1^2\right),\quad \mbox{ and } f^*(x) = f(F(x)).
\end{equation*}
Then, $\graph(h^*) \subset \mS[\partial \supp(f^*)]$. We notice that for each $s \in \partial h^*(\tau_0)$, $$(-1,s) \in \mathbb{N}(\supp(f^*)).$$ Let assume $\partial h^*(\tau_0)$ is not a singleton. Then $\supp(f^*)$ has at least two linearly independent exterior normal vectors $\theta_1,\theta_2$ at $y=(h^*(\tau_0),\tau_0)$. Applying the Kashiwara's Watermelon theorem, we obtain $$(y,\eta) \in \wf_A(f^*) \mbox{ for all } \eta \in \mathbb{R}^2 \setminus 0.$$ Let $x= F(y) \in \supp(f)$. Since $F$ is an analytic bijection from $\mathbb{R}^2$ to $\mathbb{R}^2$, \cite[Theorem~8.5.1]{Ho1} gives \begin{equation} \label{E:wfA} (x,\xi) \in \wf_A(f) \mbox{ for all } \xi \neq 0.\end{equation} We notice that $x=(h(\tau_0),\tau_0)$. Let $\mathbb{I} \ni \tau \neq \tau_0$ and $\phi \in \mathcal{A}$ such that $\pi_2( \Psi(\phi,z(\phi)) )= \tau$. There is an $r>0$ such that $\mathcal{E}(\phi,r)$ passes through $x$. Let $\xi = \nabla_x |z(\phi) - \Phi^*(\phi,x)|$, which is a normal vector of $\mathcal{E}(\phi,r)$ at $x$. Since $\Psi^*_2 (\phi,z(\phi)) \neq \Psi^*_2(\phi_0,z(\phi_0)) = x_2$, we have $$z_2(\phi) - \Phi^*_2(\phi,x) = a^*(\phi) (\Psi_2^*(\phi,z(\phi)) - x_2) \neq 0.$$ Therefore, $\nu_2(\phi,x) \neq 0$. From Lemma~\ref{L:ellip}, we obtain (noticing that $x_2>c$ since $x \in \supp(f)$)
$$(x,\xi) \not \in \wf_A(f).$$
This is a contradiction to (\ref{E:wfA}).  Thus, the theorem is proved.
\end{proof}

We remark here some geometric restrictions in Theorem~\ref{T:Linearized} may be relaxed (or even removed). That can be done by obtaining a better version of Lemma~\ref{L:ellip}, which is a topic of upcoming research. 


In summary, we have formulated an approximate continuous model for the problem of PAT reconstruction with motion and proved two uniqueness results.  In particular, we showed that when the family of deformations is known, the desired function $f$ is unique (Theorem~\ref{thm:1}), and we showed a one-to-one correspondence of the linearized problem for obtaining $\Phi$ given $f$, under mild assumptions (Theorem~\ref{T:Linearized}). These two results indicate that the reconstruction for PAT with motion may be a stable problem, at least for the vertical stretching model presented in Section~\ref{Ss:stretching}. In the next section, we will demonstrate this numerically. Namely, we consider the discretized problem and describe numerical methods for simultaneous motion estimation and PAT image reconstruction.
%
%

\section{Computational approach for simultaneous motion estimation and image reconstruction} 
\label{sec:methods}
In this section, we describe a computational approach to simultaneously estimate motion parameters and obtain a reconstruction for the discrete PAT problem.  We first recast the problem as a \emph{separable} nonlinear least squares problem.  Then we use variable projection methods  \cite{GoPe73,GoPe03,Kau75,RuWe80,o2013variable} to efficiently solve the problem by exploiting structure in the variables, i.e., separating the linear and nonlinear variables.  The work presented here builds on work presented in \cite{ChHaNa06,Chung2010b}, but a significant contribution is our application to PAT reconstruction problems under motion, where the particular forms of the forward model and the deformation model have not been considered in this framework before.  Previous work on using variable projection to improve PAT reconstruction for transducers with inaccurate electric impulse responses can be found in \cite{sheng2015constrained}. For computational efficiency, we take advantage of matrix sparsity and object-oriented programming for implementing projection operations, and we use a hybrid LSQR method for automatic regularization parameter selection \cite{ChNaOLe08}.


First, we formulate the discrete PAT reconstruction problem with motion.  Assume $c(x)$ is constant, then following the notation in Section~\ref{sec:continuous}, let $\bff\in \bbR^N$ be the discretized desired solution\footnote{Here we assume the 2D image is vectorized column-wise.} and let $\left\{z_i\right\}, i = 1, ...n$ denote the locations of the transducers.   Let us consider the discrete version of the deformation model in Section~\ref{Ss:stretching}. That is, we assume vertical stretching, which is a relatively realistic model for motion resulting from breathing. Let $a(z_i) = 1 + \gamma_i$ be the stretch factor corresponding to the transducer at $z_i$, so the $i$th transformed image is given by $\bff_i = \bfK(\gamma_i) \bff,$ where $\bfK(\gamma_i) \in \bbR^{N \times N}$ performs one dimensional stretching/expansion as described in Section~\ref{Ss:stretching}.  We use bilinear interpolation to determine the weights in sparse matrix $\bfK$.

At each transducer location $z_i$, assume there are $m$ radii and let $\bfA_i \in \bbR^{m \times N}$ be the corresponding projection matrix for that location (i.e., $\bfA_i \bff$ is the discrete spherical Radon transform of $\bff$ on circles centered at $z_i$).
Thus the observed spherical projection measurements for all $m$ radii are contained in vector
$$\bfg_i = \bfA_i \, \bfK(\gamma_i) \, \bff + \bfe_i \,\, \in \bbR^{m}$$
where $\bfe_i \in \bbR^{m}$ is additive Gaussian noise that is independent and identically distributed.
Let $\bfgamma =\begin{bmatrix}\gamma_1 & \cdots &\gamma_n \end{bmatrix}\t \in \bbR^n$ and define
$$\bfg =\begin{bmatrix} \bfg_1 \\ \vdots \\ \bfg_n
\end{bmatrix}\in \bbR^{nm}, \quad \bfA(\bfgamma) = \begin{bmatrix}
	 \bfA_1 \bfK(\gamma_1)\\ \vdots \\ \bfA_n \bfK(\gamma_n)
\end{bmatrix}\in \bbR^{mn \times N}, \quad \mbox{and} \quad \bfe = \begin{bmatrix}\bfe_1 \\ \vdots \\ \bfe_n
\end{bmatrix}\in\bbR^{mn}$$
then, the discrete mathematical model for PAT reconstruction with motion is given by
\begin{equation}
	\label{eqn:model}
	\bfg = \bfA(\bfgamma)\bff + \bfe\,,
\end{equation}
where the goal is to estimate the desired image $\bff$ as well as the motion parameters $\bfgamma$, given the observations $\bfg.$
In Figure~\ref{fig:PATproblem} we provide a sample desired image $\bff$ which has $256 \times 256$ pixels and corresponding PAT observations $\bfg$ where transducers are located at $120$ equidistant angles between $0$ and $357$ at $3$ degree intervals and each projection corresponds to $m=363$ radii.  The test image used here was obtained from the Auckland MRI Research Group website \cite{AukMRI}. It is supported inside the square $[-\frac{1}{2}, \frac{1}{2}] \times [-\frac{1}{2}, \frac{1}{2}]$. In our numerical experiment, the base line of the vertical stretch is $x_2 = -\frac{1}{2}$ and is assumed known. An illustration of vertical stretching is provided in Figure~\ref{fig:motionillustrate}.
\begin{figure}[bthp]\footnotesize
\begin{center}
	\begin{tabular}{cc}
		\includegraphics[width=1.5in]{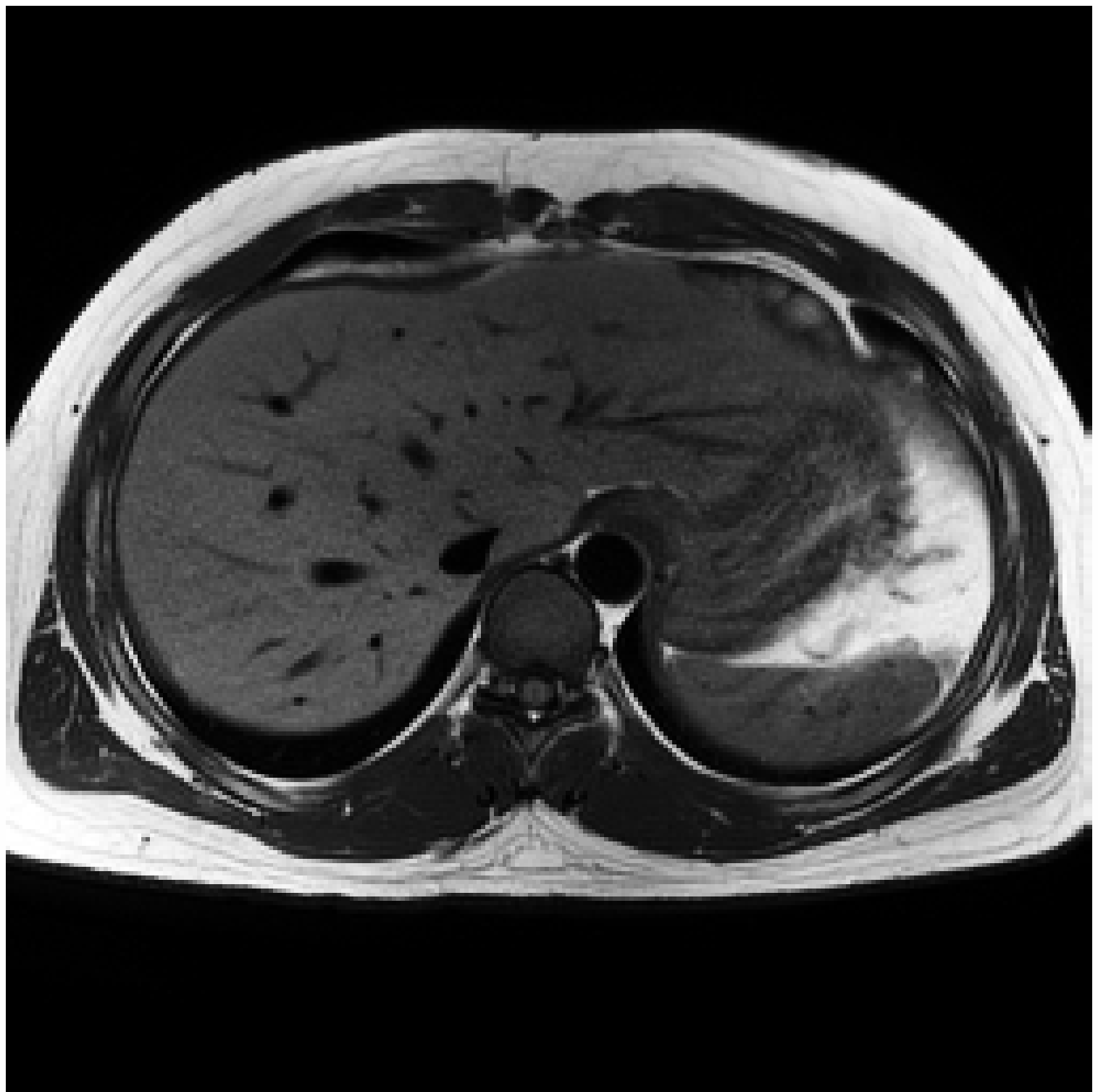} &
						\includegraphics[width=1in, angle =90]{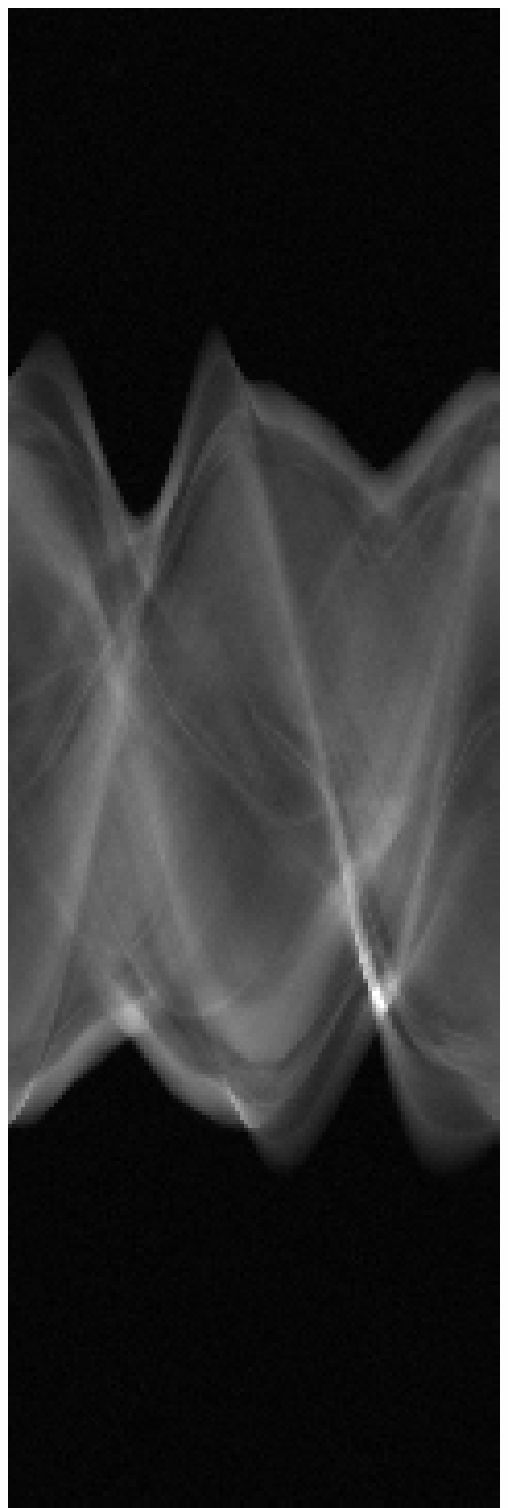}  \\
	(a) Desired image & (b) Observations in sinogram
	\end{tabular}
\end{center}
\caption{In 2D PAT reconstruction, the goal is to reconstruct image in (a) from observed measurements shown in (b). Each row of the sinogram image corresponds to a transducer location, where the columns correspond to different radii.  Here, the sinogram image is $120 \times 363$.}
	\label{fig:PATproblem}
\end{figure}

\begin{figure}
	\includegraphics[width = 6in]{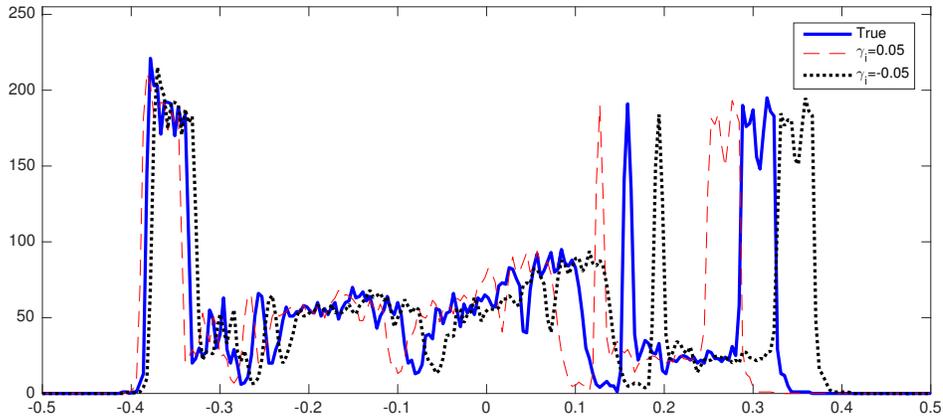}\\
\label{fig:motionillustrate}
\caption{Illustration of vertical stretching.  The solid line corresponds to pixel values contained in column $100$ of the true image shown in Figure~\ref{fig:PATproblem}(a). The dashed and dotted lines correspond to pixel values in column $100$ of deformed images $\bfK(\gamma_i) \bff$ (after reshape) for $\gamma_i = 0.05$ and $\gamma_i = -0.05$ respectively.  Notice that features that are farther from the base line are stretched more. }
\end{figure}

If the motion parameters $\bfgamma$ are fixed, then estimating the image $\bff$ is a linear inverse problem, and various algebraic methods can be used for reconstruction.  In particular, the least squares (LS) problem,
\begin{equation}
	\label{eqn:lin}
	\min_\bff \norm[2]{\bfA(\bfgamma)\bff - \bfg }^2
\end{equation}
gives one solution, but as we will see in Section~\ref{sub:solving_the_linear_reconstruction_problem}, the LS solution is poor, and regularization is required to stabilize the solution.  Regardless, if the motion parameters are not known precisely or only estimated from the data, then the problem becomes simultaneous estimation of the motion parameters and the desired image.  In this paper, we consider variable projection methods to solve the separable nonlinear least squares problem,
\begin{equation}\label{eq:sepLS}
	\min_{\bff, \bfgamma} \norm[2]{\bfA(\bfgamma) \bff -\bfg}^2\,.
\end{equation}
There are a few important features that make this problem ideal for variable projection methods.  First, for a parameterized motion model (e.g., vertical stretching) that is a good approximation to the actual motion, the number of parameters in $\gamma$ is typically much smaller than the number of unknowns in $\bff$ (i.e., $n \ll N$).  Furthermore the residual function in~\eqref{eq:sepLS} is linear in $\bff$ and nonlinear in $\bfgamma$, so we can exploit separability.  In many applications, such features are exploited by decoupling the problems. That is, researchers first seek a good set of parameters $\bfgamma$ by using sophisticated image registration or calibration techniques, and then solve a linear inverse problem for $\bff$.  Although this approach (or an approach that alternates between the two steps) may work in some scenarios, convergence to a solution can be slow \cite{ChHaNa06}.  On the other hand, a fully coupled approach that simultaneously optimizes over $\bff$ and $\bfgamma$ could be used, but being able to obtain a good estimate of $\bfgamma$ may be prohibitively expensive, especially if the variables are tightly coupled \cite{ChHaNa06}. Thus, in this paper, we consider a partially coupled framework that can take algorithmic advantage of the structure of the problem and handle nonlinearity directly.

To begin, we describe some of the challenges and approaches to solve the linear reconstruction problem~\eqref{eqn:lin} (when $\bfgamma$ is fixed) in Section~\ref{sub:solving_the_linear_reconstruction_problem}.  Then, in Section~\ref{sub:variable_projection_for_simultaneous_estimation}, we describe a variable projection approach to solve~\eqref{eq:sepLS} for simultaneous PAT motion estimation and reconstruction.

\subsection{Regularization for the linear problem} 
\label{sub:solving_the_linear_reconstruction_problem}
Assume motion parameters $\bfgamma$ are fixed. Even if accurate estimates of parameters $\bfgamma$ are available, the linear inverse problem~\eqref{eqn:model} where $\bfA = \bfA(\bfgamma)$ can be very difficult to solve.  This is because the underlying problem is \emph{ill-posed} \cite{Hadamard1923}, where a main challenge is that small errors in the data may result in large errors in the reconstruction.  

In order to obtain a meaningful reconstruction, regularization is needed to stabilize the inversion process.  The basic idea of regularization is to impose prior knowledge about the noise in the data and about the unknown solution \cite{Kaipio2004,Calvetti2011,Mueller2012,Hansen2010,Bertero1998,Engl2000,Vogel2002,HaHa93}.
Here we consider Tikhonov regularization \cite{phillips1962technique,tikhonov1977solution}, where the goal is to solve regularized problem,
\begin{equation}
	\min_{\bff} \frac{1}{2}\| \bfA \bff - \bfg \|_2^2 + \lambda^2 \norm[2]{\bff}^2
\end{equation}
where $\lambda$ is a regularization parameter that balances the tradeoff between the data fit term and the regularization term.  Selecting a good regularization parameter can be a challenging and delicate task, especially for large-scale problems.  Various techniques have been studied in the literature \cite{Hansen2010}, and it is beyond the scope of this paper to provide a complete review of such methods.  In particular, we use hybrid LSQR methods that can solve large linear least squares problems and automatically estimate regularization parameters for Tikhonov.  Below we provide an illustration, for readers not familiar with hybrid iterative methods.

\paragraph{Illustration of hybrid methods}
Hybrid iterative methods have been proposed as a means to compute regularized solutions to large-scale inverse problems.  In this illustration, we demonstrate the use of hybrid methods for the linear PAT reconstruction problem.  Let $\bfA = \bfA(\bfgamma^\true)$ and consider using LSQR on the unregularized problem.  With no additional regularization (other than early termination of the iterative method), it is known that semiconvergence will occur, whereby early reconstructions tend to provide improved solution reconstructions, but noise dominates later reconstructions.  This phenomenon is evident in the relative error plot (see Figure \ref{fig:linear} for relative reconstruction errors, computed as $\norm[2]{\bff^\true - \bff^{(k)}}/\norm[2]{\bff^\true}$ where $\bff^{(k)}$ is the reconstruction at the $k$th iteration).  Hybrid methods combine an iterative scheme such as the Golub-Kahan bidiagonalization with a direct regularization method such as Tikhonov regularization.  Since regularization is performed on the projected problem, hybrid methods can overcome semiconvergence.  Furthermore, sophisticated regularization parameter selection methods can be used for the projected problem.  In this paper, we use the hybrid LSQR implementation, HyBR, described in \cite{ChNaOLe08}, which provides a Tikhonov-regularized solution where the regularization parameter is automatically selected using a weighted-GCV method. In Figure~\ref{fig:linear}, we provide relative reconstruction errors for HyBR, as well as for HyBR where the optimal regularization parameter is used. This parameter corresponds to minimal reconstruction error and is not obtainable in practice.  For this problem, HyBR performs well, where the selected regularization parameter is $\lambda_{\rm WGCV} = .0389$ at iteration $100$, and the corresponding reconstruction is provided in Figure~\ref{fig:linearimages}(a).  In summary, for large problems such as PAT where it may be difficult to obtain a good regularization parameter a priori, hybrid methods can be good for obtaining regularized solutions for linear problems.

\begin{figure}[bthp]
\begin{center}
		\includegraphics[width=5.5in]{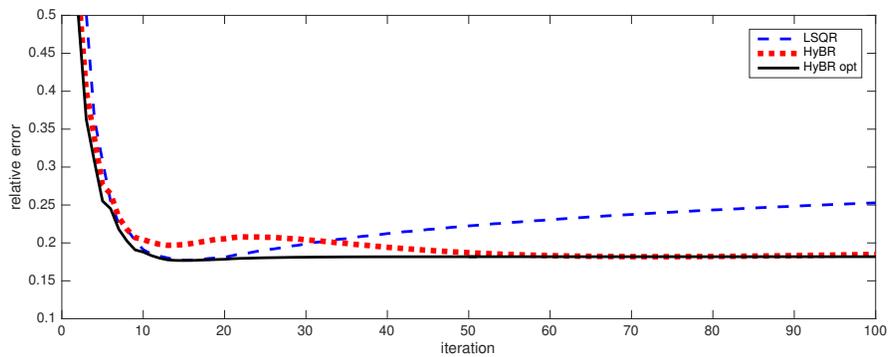} 
\end{center}
\caption{Relative reconstruction errors for the linear PAT reconstruction problem corresponding to LSQR (no regularization) and HyBR with automatic weighted-GCV-selected and the optimal regularization parameter.}
	\label{fig:linear}
\end{figure}

\begin{figure}[bthp]
\begin{center}
	\begin{tabular}{cc}
		\includegraphics[width=1.6in]{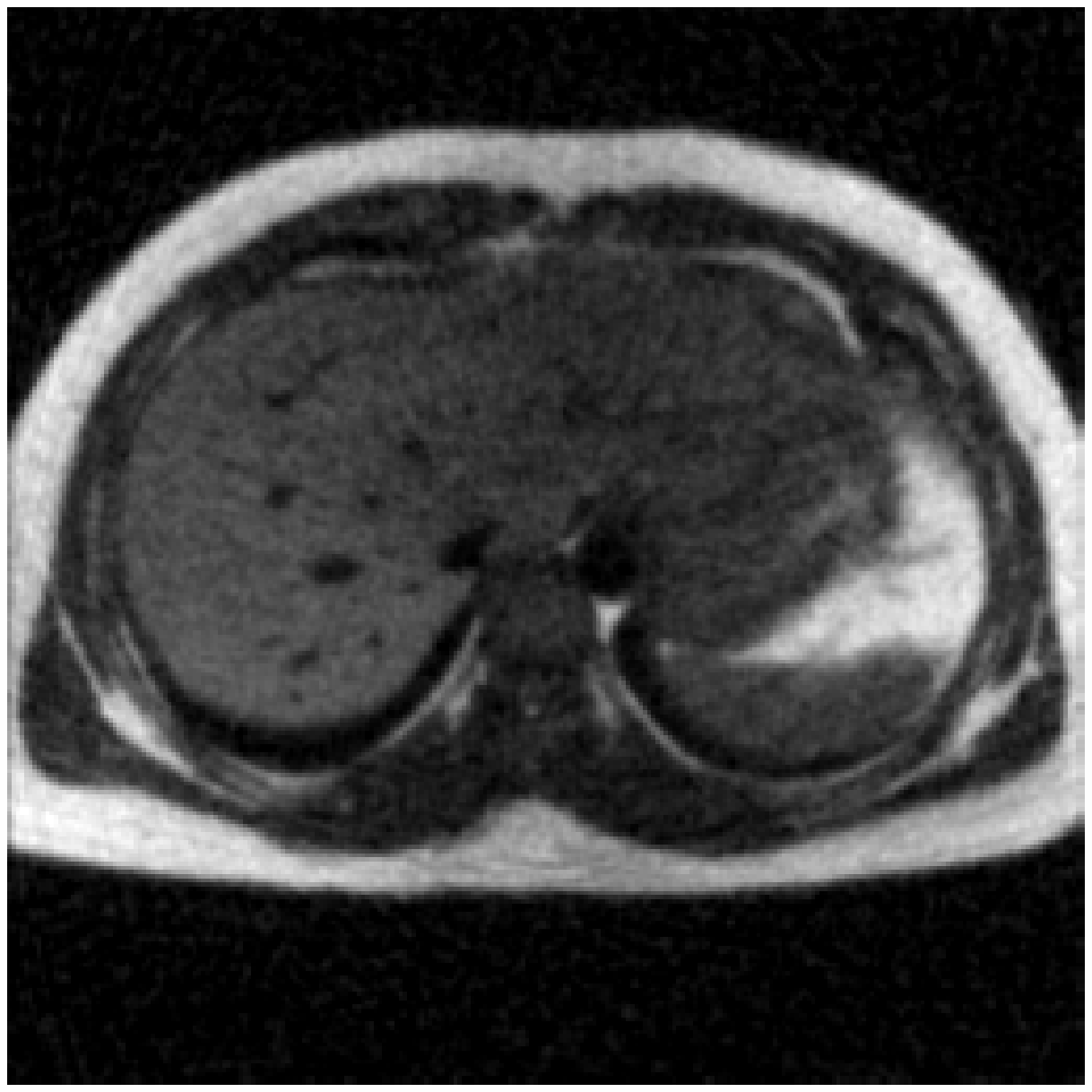} &
		\includegraphics[width=1.6in]{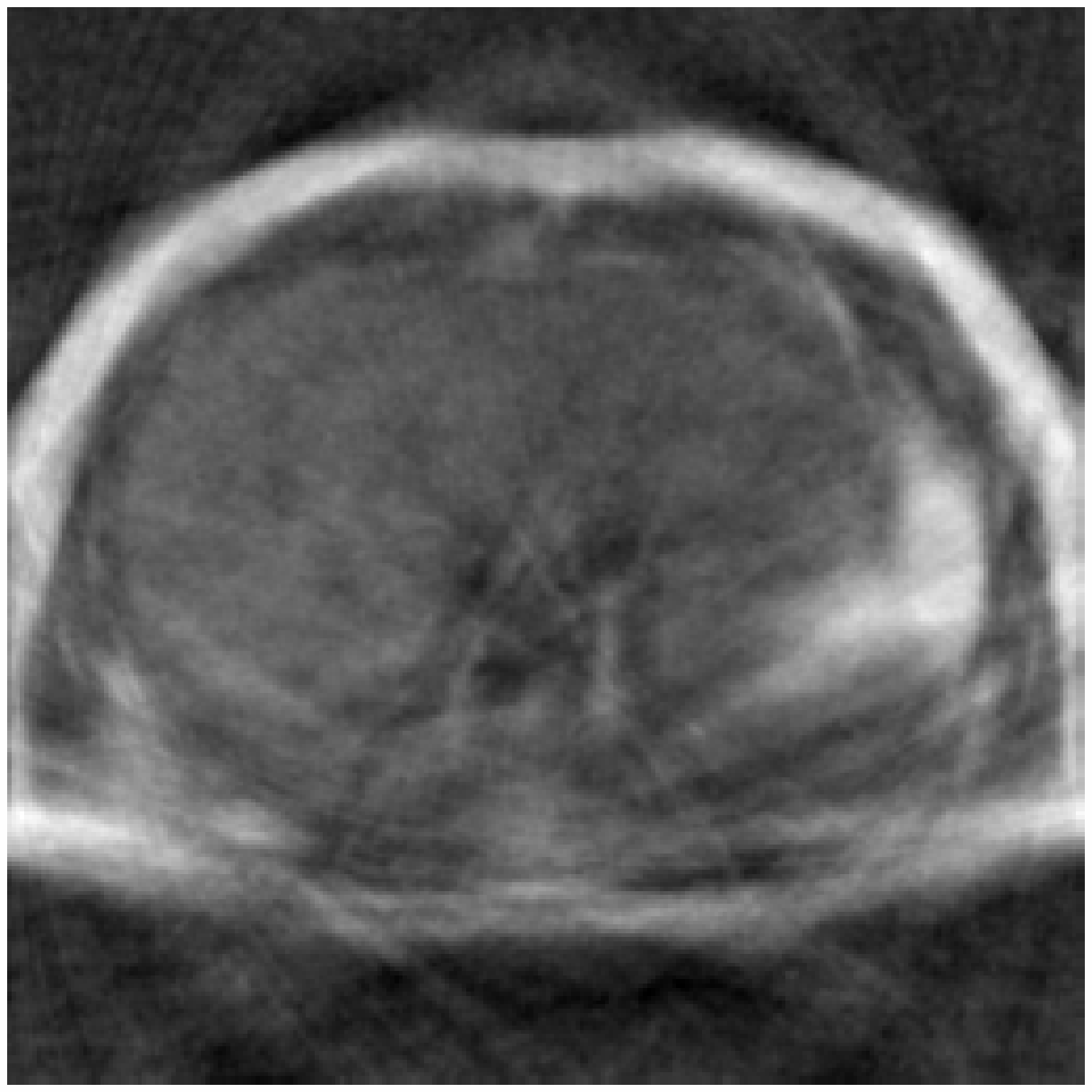} 
		\\
	(a) HyBR ($\bfgamma = \bfgamma^\true$) & (b) Motion artifacts ($\bfgamma = \bfzero$) 
	\end{tabular}
\end{center}
\caption{
The HyBR reconstruction in (a) demonstrates that a good solution can be obtained by including appropriate regularization. PAT regularized reconstruction in (b) (corresponding to the optimal regularization parameter) illustrates motion artifacts.}
	\label{fig:linearimages}
\end{figure}

However, all of these results rely on knowledge of the true motion parameters, $\bfgamma^\true$. In Figure~\ref{fig:linearimages}(b), we provide a HyBR reconstruction where $\bfA$ represents only projection (that is, motion is ignored and $\bfgamma = \bfzero$).  It is evident that, even if the optimal regularization parameter is used as is the case here, motion artifacts can severely degrade image reconstructions.


\subsection{Variable projection for simultaneous motion estimation and image reconstruction} 
\label{sub:variable_projection_for_simultaneous_estimation}
In this section, we describe a variable projection approach to solve separable nonlinear least squares problems like~\eqref{eq:sepLS} so that motion parameters can be automatically estimated and improved during the reconstruction process.  Following the discussion in Section~\ref{sub:solving_the_linear_reconstruction_problem}, since the problem is ill-posed, we consider the regularized problem
\begin{equation}
	\min_{\bff, \bfgamma} \frac{1}{2}\| \bfA(\bfgamma) \bff - \bfg \|_2^2 + \lambda^2 \norm[2]{\bff}^2\,.
\end{equation}
Although additional regularization for the motion parameters may be included, it is not needed here due to the parameterization of the motion.

The basic idea of variable projection is to mathematically eliminate the linear parameters $\bff$ and optimize over the nonlinear parameters $\bfgamma.$  For notational simplicity, for fixed $\bfgamma$, define 
\begin{equation}
	\label{eqn:linear}
	\bff(\bfgamma) = \argmin_\bff \frac{1}{2}\| \bfA(\bfgamma) \bff - \bfg \|_2^2 + \lambda^2 \norm[2]{\bff}^2,
\end{equation}
i.e., the solution to the linear Tikhonov problem.  Then the goal is to solve the \emph{reduced} nonlinear least squares problem,
\begin{equation}
	\label{eqn:reduced}
	\min_\bfgamma \frac{1}{2}\| \bfA(\bfgamma) \bff (\bfgamma) - \bfg \|_2^2\,,
\end{equation}
where any nonlinear least squares solver can be used to solve~\eqref{eqn:reduced}.  Here, we use a Gauss-Newton method since derivatives can be computed or estimated efficiently for this problem.  In particular, let $\bfr(\bfgamma) = \bfA(\bfgamma) \bff (\bfgamma) - \bfg,$ then the Jacobian with respect to $\bfgamma$ can be written as
\begin{equation}
	\label{eqn:Jacobian}
	\bfJ = \begin{bmatrix} \bfd_1 && \\ & \ddots & \\ && \bfd_n
	\end{bmatrix},\quad \mbox{where} \quad \bfd_i = \frac{\partial[\bfA_i \bfK(\gamma_i) \bff]}{\partial \gamma_i} \in \bbR^{m\times 1}\,.
\end{equation}
A Gauss-Newton approach to solve~\eqref{eqn:reduced} is provided in Algorithm~\ref{alg:GNReduced}.

\begin{algorithm}
	\caption{Gauss-Newton variable projection approach}\label{alg:GNReduced}
\begin{algorithmic}[1]
\STATE Initialize $\bfgamma^{(0)}$
\WHILE{stopping criteria not satisfied}
\STATE Solve~\eqref{eqn:linear} for $\bff(\bfgamma^{(k)})$
\STATE Compute the Jacobian matrix $\bfJ$ as~\eqref{eqn:Jacobian}
\STATE Compute step $\bfs^{(k)}$ by solving $\bfJ\t \bfJ \bfs = -\bfJ\t \bfr^{(k)}$
\STATE Update $\bfgamma^{(k+1)} = \bfgamma^{(k)} + \bfs^{(k)}$
\ENDWHILE
\end{algorithmic}
\end{algorithm}

The most computationally intensive part of the above algorithm is Step 3, solving ~\eqref{eqn:linear}.  We propose to use the hybrid LSQR approach described in \cite{ChNaOLe08} and illustrated in Section~\ref{sub:solving_the_linear_reconstruction_problem} for efficient implementation and automatic regularization parameter selection.  Also, due to our assumption that the motion represents one dimensional stretching, we can use the derivation in \cite{ChHaNa06} to get a computationally efficient analytic formula for the Jacobian.  Note that for Step 5, any linear solver could be used since the size of this problem is fairly small.  Furthermore, a line search can be included in step 6.
Additional considerations that can be incorporated include constraints on $\bfgamma$ \cite{o2013variable} and nonegativity constraints on $\bff$ \cite{cornelio2014constrained}.

\section{Numerical Results} 
\label{sec:numerical_results}
In this section, we provide numerical results demonstrating the variable projection Gauss-Newton approach for simultaneous estimation of motion parameters and PAT reconstruction. 
For this example, the goal is to reconstruct the image in Figure~\ref{fig:PATproblem}(a).  The observed sinogram in Figure~\ref{fig:PATproblem}(b) was obtained using $120$ projections from transducers located between 0 and 354 degrees at 3 degree intervals, and each projection corresponds to 363 radii. In particular, the sinogram was computed using Equation~\eqref{eqn:model} where true motion parameters $\gamma^\true \in \bbR^{120}$ were obtained using the cosine function and are provided in Figure~\ref{fig:gammacomputed}.  These motion parameters were selected to mimic rhythmic movement of the patient, e.g., due to breathing or pulsing during image acquisition.  Gaussian white noise was added to the sinogram such that the noise level was $\norm[2]{\bfe}/ \norm[2]{\bfA(\bfgamma^\true) \bff^\true} = .03.$  

Since for this problem, the size of $\bfA$ is $43,917 \times 65,536$, working with the matrix explicitly is not feasible. Most implementations avoid construction of the projection matrices $\bfA_i$ by using function calls, where each function call may require up to three \verb|for| loops (over all angles, all radii, and all intersecting pixels, as one needs to implement the numeral integrations directly).  Such approaches can be very slow, especially if many matrix-vector multiplications (mat-vecs) need to be done.  Although parallel computing could be used to accelerate the process, there is still the issue of performing unnecessary computations.  In our implementation, we exploit the fact that each projection matrix, $\bfA_i,$ is extremely sparse ($99.78\%$ of the entries are zero on average) and does not depend on $\bfgamma$.  We precompute nonzero entries for each projection image and store them in sparse matrix format.  Then we implemented matrix-vector and matrix-transpose-vector\footnote{Here, matrix-vector multiplication corresponds to motion followed by the spherical mean transform, and matrix-transpose-vector multiplication corresponds to the adjoint spherical mean transform followed by transpose with the motion matrix.} multiplications using object-oriented programming.  For this example,
evaluating a complete forward projection (over all $120$ angles and excluding motion) via function evaluation took $8.41$ seconds, whereas computing the nonzero entries for all $120$ projection matrices took $87.37$ seconds, after which each mat-vec required only $.032$ seconds\footnote{CPU timings are averaged over $100$ runs on a MacBook Pro, OSX Yosemite, 2.9 GHz Intel Core i7, 8G memory in Matlab 2015b.}.  Even with the initial cost to compute and store nonzero elements, such speed-up is significant since two mat-vecs (one matrix-vector and one matrix-transpose-vector) need to be performed at each of up to 100 iterations of the linear solver, and this is done at each Gauss-Newton iteration.
Thus, our implementation makes both iterative linear solvers and nonlinear optimization for PAT computationally feasible.

For the Gauss-Newton algorithm, we used an initial guess of no motion, i.e., $\bfgamma^{(0)}=\bfzero$.  We compare three approaches for solving the linear subproblem (line 3 in Algorithm~\ref{alg:GNReduced}): LSQR with $100$ iterations (GN-LSQR), HyBR with weighted-GCV parameter $\lambda_{\rm WGCV}$ (GN-HyBR), and HyBR with optimal regularization parameter, $\lambda_{\rm opt},$ (GN-HyBR-opt).  Results corresponding to GN-HyBR-opt are not obtainable in practice, but are provided solely as a reference.

\begin{table}
	\caption{Relative parameter error $\epsilon_\bfgamma$ and reconstruction error $\epsilon_\bff$ at each iteration of the Gauss-Newton approach, comparing three methods for solving the linear subproblem: LSQR (100 iterations), HyBR, and HyBR with the optimal regularization parameter.}
	\begin{center}
	\begin{tabular}{|c|cc||cc| c||cc|c|}\hline
& \multicolumn{2}{c||}{GN-LSQR} & \multicolumn{3}{c||}{GN-HyBR}& \multicolumn{3}{c|}{GN-HyBR-opt}\\
		\hline
GN iter & $\epsilon_\gamma$ & $\epsilon_\bff$   &  $\epsilon_\gamma$  & $\epsilon_\bff $ & $\lambda_{\rm WGCV}$   &  $\epsilon_\gamma$  & $\epsilon_\bff $ & $\lambda_{\rm opt}$  \\ \hline 
1  &  1.0000  &  0.5606 & 1.0000 &  0.4609 &    0.0268 &    1.0000 &  0.4361 & 0.0407 \\ \hline
2  &  0.8811  &  0.6297 & 0.7824 &  0.4330 &  	0.0248 &	0.6838 &  0.3851 & 0.0342 \\ \hline
3  &  0.8104  &  0.5944 & 0.6269 &  0.3976 &  	0.0238 &	0.4897 &  0.3413 & 0.0304 \\ \hline
4  &  0.7356  &  0.5737 & 0.5001 &  0.3642 &  	0.0229 &	0.3601 &  0.3071 & 0.0291 \\ \hline
5  &  0.6622  &  0.5431 & 0.3959 &  0.3350 &  	0.0221 &	0.2758 &  0.2844 & 0.0292 \\ \hline
6  &  0.5920  &  0.5187 & 0.3160 &  0.3104 &  	0.0214 &	0.2228 &  0.2712 & 0.0288 \\

\hline
\end{tabular}
\end{center}
	\label{tab:GN}
\end{table}

Relative errors at each Gauss-Newton iteration defined as
$$\epsilon_\bfgamma = \frac{\norm[2]{\bfgamma^{(k)}-\bfgamma^\true}}{\norm[2]{\bfgamma^\true}} \quad \mbox{ and } \quad \epsilon_\bff = \frac{\norm[2]{\bff(\bfgamma^{(k)})-\bff^\true}}{\norm[2]{\bff^\true}}$$
are provided in the Table~\ref{tab:GN}. We observe that in all considered cases, we were able to obtain improved estimates of the motion parameters using the nonlinear scheme.  Computed values $\bfgamma^{(6)}$ for GN-LSQR and GN-HyBR are provided in Figure~\ref{fig:gammacomputed}.  Notice that since GN-HyBR automatically incorporates regularization for solving the linear problem and hence produced more accurate reconstructions at each iteration, GN-HyBR resulted in superior estimates for the motion parameters than LSQR with 100 iterations.  Due to semiconvergence behavior (as described in Section~\ref{sub:solving_the_linear_reconstruction_problem}), standard stopping criteria for iterative methods tend to perform poorly for unregularized inverse problems.

\begin{figure}
	\includegraphics[width = 6.5in]{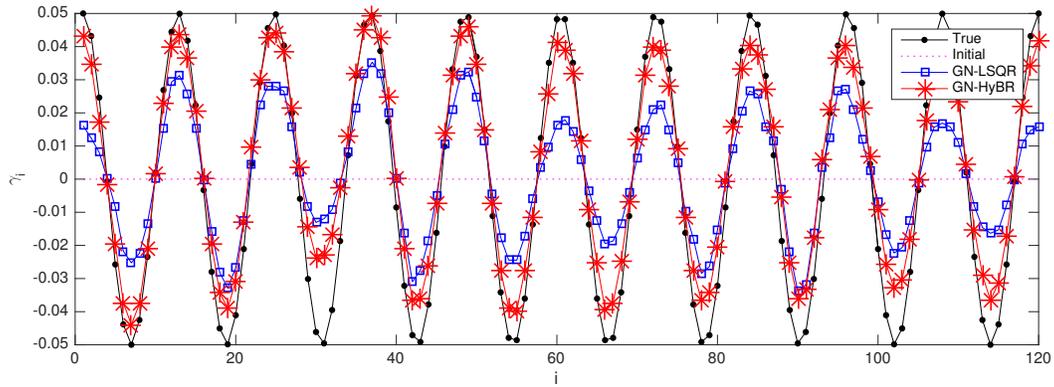}
\label{fig:gammacomputed}
\caption{Motion parameters. The black, solid line corresponds to true motion parameters $\gamma^\true$ for each transducer given by $\gamma_i = 0.05 \cos (10\, \phi_i)$.  The initial guess assumes no motion, thus $\bfgamma^{(0)} = \bfzero$ (dotted line).  Computed motion parameters after 6 Gauss-Newton iterations where LSQR and HyBR were used to solve the linear subproblem are provided in blue squares and red stars respectively.}
\end{figure}

Finally, we provide image reconstructions in Figure~\ref{fig:reconimages} corresponding to the initial Gauss-Newton reconstruction (ignoring motion), the PAT reconstruction after 6 Gauss-Newton iterations, using LSQR and HyBR to solve the linear problem.  In all of the displayed images, we restrict the pixel range to between 0 and 255 (which corresponds to the pixel range for the true image).  Thus, for visualization purposes only, any pixel value less than 0 in the displayed image gets set to 0 and any pixel larger than 255 is set to 255.

\begin{figure}
\begin{center}
	\begin{tabular}{ccc}
		\includegraphics[width = 1.5in]{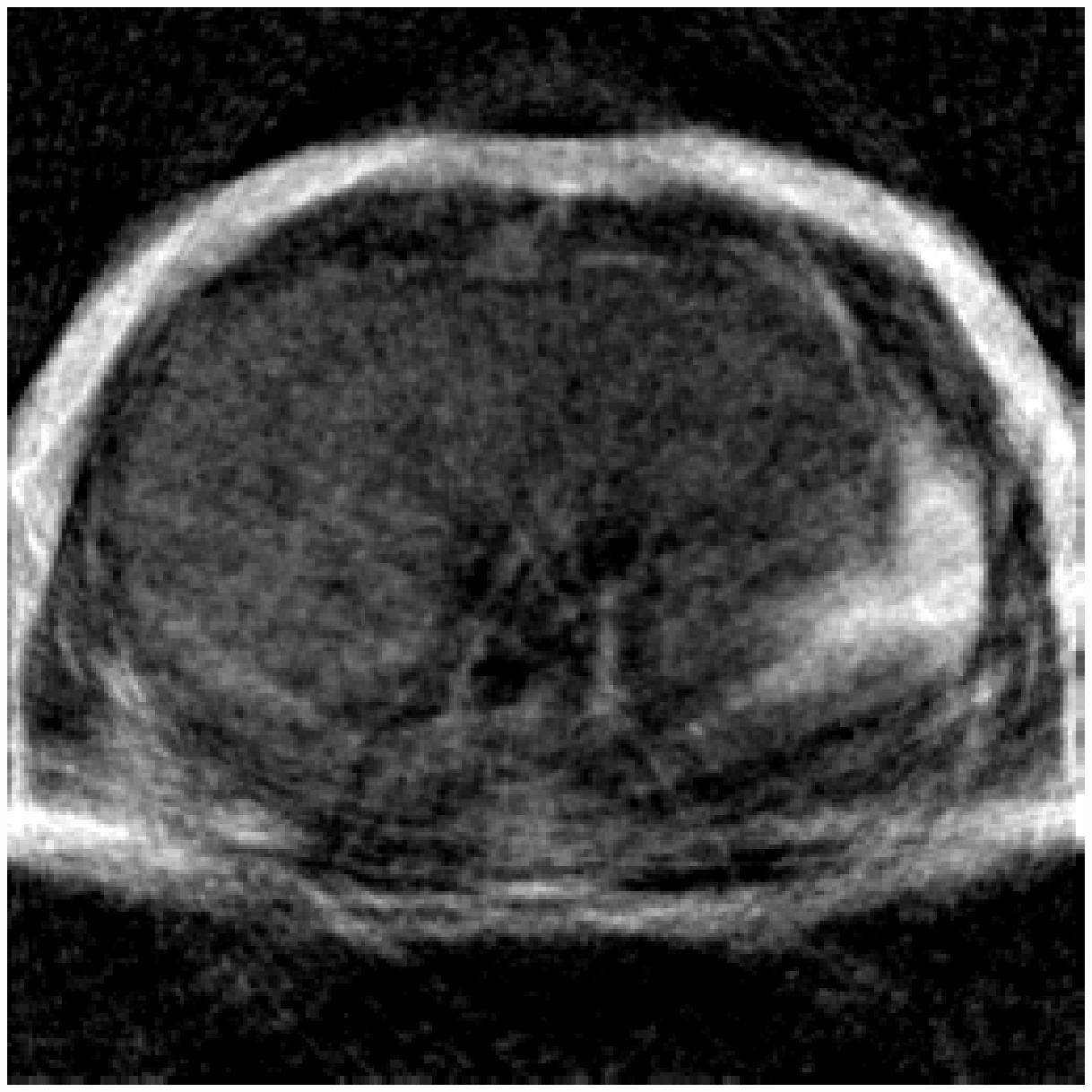} &
	\includegraphics[width = 1.5in]{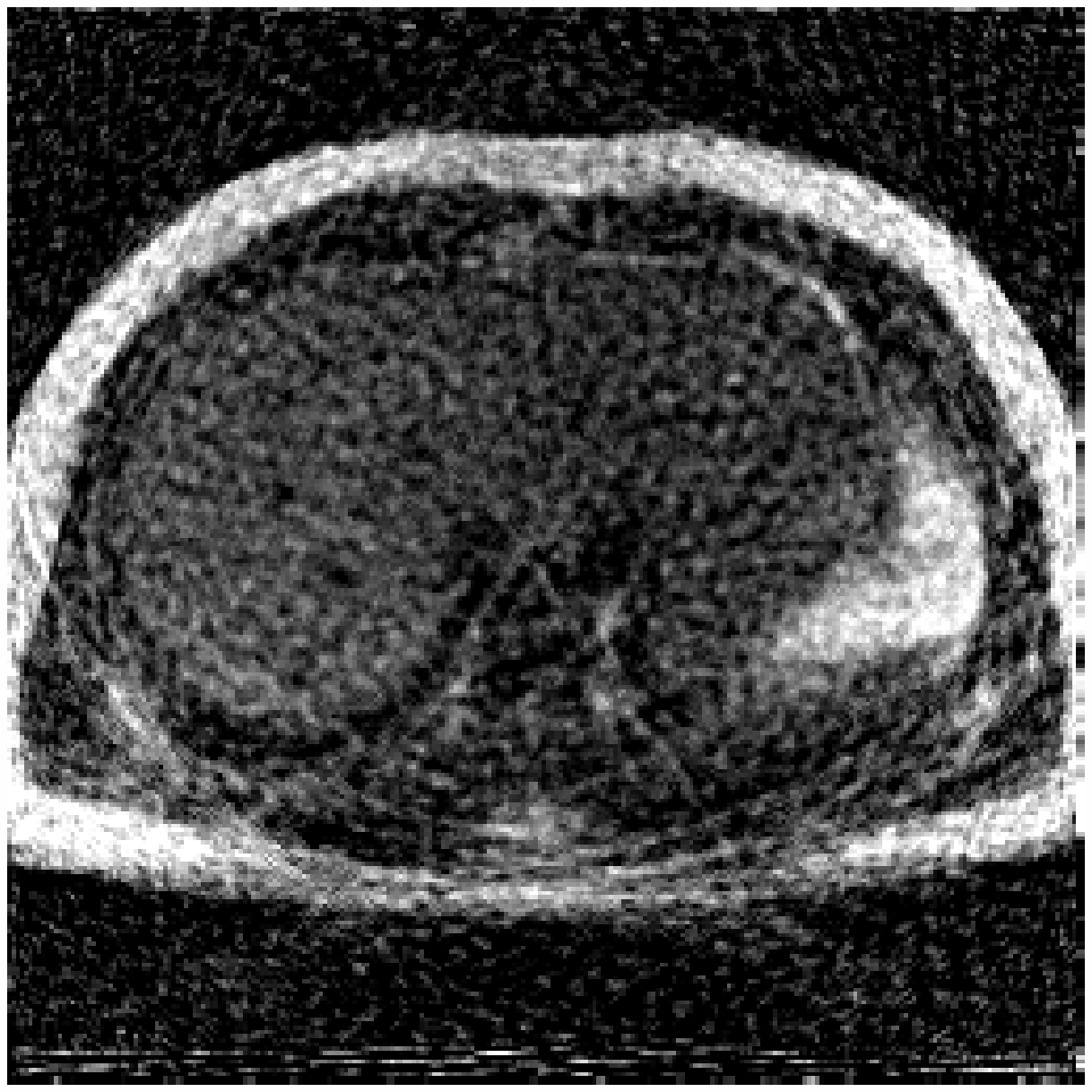} &
				\includegraphics[width = 1.5in]{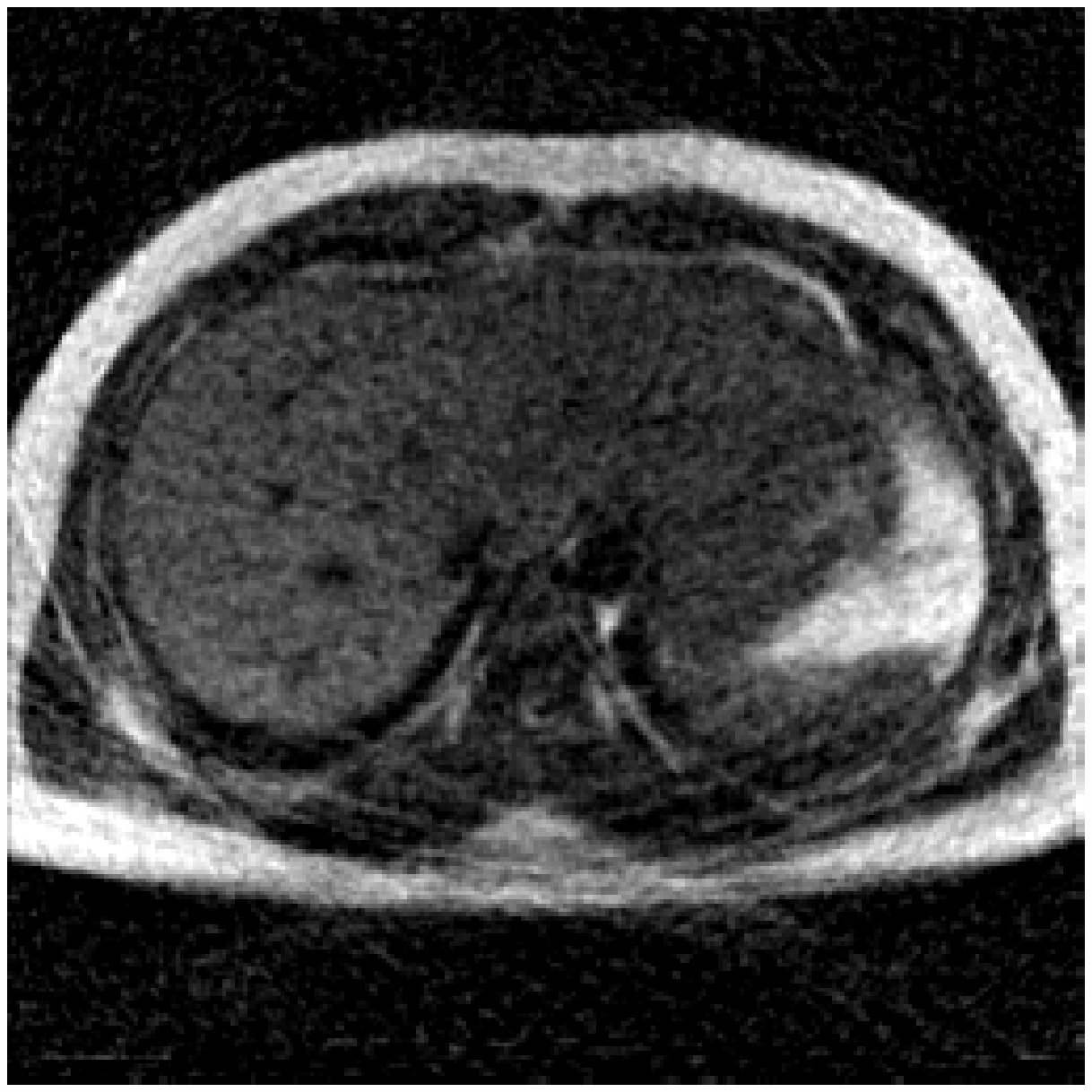}\\
				(a) GN-HyBR, $\bff(\bfgamma^{(0)})$& 			(b) GN-LSQR &  		(c)	GN-HyBR
			\end{tabular}
\end{center}
\label{fig:reconimages}
\caption{PAT image reconstructions. The regularized reconstruction in (a) is the initial reconstruction (ignoring motion) that exhibits motion artifacts.  Reconstructions in (b) and (c) were obtained after 6 Gauss-Newton iterations, where LSQR and HyBR respectively were used to solve the linear problem.}
\end{figure}

\section{Conclusions} 
\label{sec:conclusions}

In this paper, we considered the problem of motion estimation and correction in photoacoustic tomography.  We established a hybrid-type model that incorporates parameterized motion deformations in the mathematical formulation.  Then we derived two uniqueness results for the new model, under mild assumptions.  In particular, we showed that such results hold for one-dimensional stretching. Then assuming that the parameterized motion model is a good approximation to the exact motion, we formulated the problem of simultaneous motion estimation and image reconstruction as a separable nonlinear least squares problem and described a computational approach for solving the problem. We note that the described variable projection approach can be extended to other deformation models (e.g., including rotation and translation) \cite{ChHaNa06}.
The benefits of our approach are that we exploit high-level structure to handle nonlinearities, we incorporate hybrid iterative methods for automatic regularization parameter selection, and we exploit matrix sparsity and use object-oriented programming for efficiency.  Numerical results demonstrate computational feasibility and validate our approach.

\section*{Acknowledgement}
The authors thank Professor P. Kuchment, Professor T. Quinto, and Y. Lou for helpful comments. They also thank the anonymous referees their thorough reviews. L.N.'s research is partially supported by the NSF Grants DMS-1212125 and DMS-1616904. 

\appendix
\section {Proof of the equivalence of (A.5) \& (A.6) and (B) \& (C)} \label{A:Bolker}
In this section, we prove that the condition (A.5) \& (A.6) is equivalent to (B) \& (C).

Let us first prove that (A.5) is equivalent to (B). Indeed, we can easily see that $\pi_L$ is injective if and only if for all $\phi \in (\alpha,\beta)$, the mapping
$$\Pi: x \in \mathcal{O} \longmapsto (\frac{1}{2} |z(\phi) - \Phi(\phi,x)|^2, \frac{1}{2} \partial_\phi |z(\phi) - \Phi(\phi,x)|^2)$$ is injective. 
We only need to prove that $\pi_L$ is an immersion if and only if $\Pi$ is. Let us denote
$$ J = \left(\begin{array}{ccc}
    \frac{\partial^2 I}{\partial x_1 \partial \phi} & \frac{\partial^2 I}{\partial x_1 \partial r} &  \frac{\partial I}{\partial x_1}  \\[6pt] 
    \frac{\partial^2 I}{\partial x_2 \partial \phi} & \frac{\partial^2 I}{\partial x_2 \partial r} &  \frac{\partial I}{\partial x_2} \\ [6pt]
    \frac{\partial I}{\partial \phi} & \frac{\partial I}{0 \partial r} &  0\\ 
  \end{array} \right).$$
Due to \cite[Lemma 4.2]{palamodov2007remarks}, $\pi_L$ is a immersion if and only if $\det J \neq 0$.
Simple calculations show that
$$ J = \left(\begin{array}{ccc}
    \frac{\partial \Pi_2}{\partial x_1} & 0 &  \frac{\partial \Pi_1}{\partial x_1}  \\ [6 pt]
    \frac{\partial \Pi_2}{\partial x_2} & 0 &  \frac{\partial \Pi_1}{\partial x_2} \\ [6 pt]
    \frac{\partial I}{\partial \phi} & -r &  0\\ 
  \end{array} \right).$$
We, hence, obtain
$$\det J = - r \det J_\Pi,$$
where $J_\Pi$ is the Jacobian of $\Pi$. Therefore, $\det J \neq 0$ if and only if $\det J_\Pi \neq 0$. That is, $\pi_L$ is an immersion if and only if $\Pi$ is. 

Assuming that (A.5) (or, equivalently, (B)) holds, it now remains to show that (A.6) is equivalent to (C). We first note that, condition (B) also implies $\pi_R$ is a submersion (see, e.g, \cite{palamodov2007remarks}). Moreover, it is easy to check that (C) is equivalent to the fact that $\pi_R$ is surjective. Therefore, (C) is now equivalent to (A.6), i.e., $\pi_R$ is a surjective submersion. This finishes our proof.



\begin{thebibliography}{10}

\bibitem{AukMRI}
Aukland {MRI} research group.
\newblock {http://atlas.scmr.org/download.html}.

\bibitem{anastasio2005half}
{\sc Anastasio, M., Zhang, J., Pan, X., Zou, Y., Ku, G., and Wang, L.}
\newblock Half-time image reconstruction in thermoacoustic tomography.
\newblock {\em IEEE Transactions on Medical Imaging 24}, 2 (2005), 199--210.

\bibitem{anastasio2007application}
{\sc Anastasio, M.~A., Zhang, J., Modgil, D., and La~Rivi{\`e}re, P.~J.}
\newblock Application of inverse source concepts to photoacoustic tomography.
\newblock {\em Inverse Problems 23}, 6 (2007), S21.

\bibitem{Bertero1998}
{\sc Bertero, M., and Boccacci, P.}
\newblock {\em Introduction to Inverse Problems in Imaging}.
\newblock Taylor \& Francis, Abingdon, UK, 1998.

\bibitem{blondel20043d}
{\sc Blondel, C., Vaillant, R., Malandain, G., and Ayache, N.}
\newblock {3D} tomographic reconstruction of coronary arteries using a
  precomputed {4D} motion field.
\newblock {\em Physics in medicine and biology 49}, 11 (2004), 2197.

\bibitem{boman1987support}
{\sc Boman, J., and Quinto, E.~T.}
\newblock {\em Support theorems for real analytic Radon transforms}.
\newblock Stockholms Universitet. Matematiska Institutionen, 1987.

\bibitem{brecht09whole}
{\sc Brecht, H.-P., Su, R., Fronheiser, M., Ermilov, S.~A., Conjusteau, A., and
  Oraevsky, A.~A.}
\newblock Whole-body three-dimensional optoacoustic tomography system for small
  animals.
\newblock {\em Journal of biomedical optics 14}, 6 (2009), 064007--064007.

\bibitem{bros1975support}
{\sc Bros, J., and Iagolnitzer, D.}
\newblock Support essentiel et structure analytique des distributions.
\newblock {\em Seminaire Goulaouic-Lions-Schwartz, exp 18\/} (1975).

\bibitem{Calvetti2011}
{\sc Calvetti, D., and Somersalo, E.}
\newblock {\em Imaging Handbook of Mathematical Methods in Imaging}.
\newblock Springer, New York, 2009, ch.~Statistical methods in imaging,
  pp.~914--957.

\bibitem{ChHaNa06}
{\sc Chung, J., Haber, E., and Nagy, J.~G.}
\newblock Numerical methods for coupled super-resolution.
\newblock {\em Inverse Problems 22\/} (2006), 1261--1272.

\bibitem{Chung2010b}
{\sc Chung, J., and Nagy, J.~G.}
\newblock An efficient iterative approach for large-scale separable nonlinear
  inverse problems.
\newblock {\em SIAM Journal on Scientific Computing 31}, 6 (2010), 4654--4674.

\bibitem{ChNaOLe08}
{\sc Chung, J., Nagy, J.~G., and O'Leary, D.~P.}
\newblock A weighted {GCV} method for {L}anczos hybrid regularization.
\newblock {\em Elec.~Trans.~Numer.~Anal. 28\/} (2008), 149--167.

\bibitem{cornelio2014constrained}
{\sc Cornelio, A., Piccolomini, E.~L., and Nagy, J.~G.}
\newblock Constrained numerical optimization methods for blind deconvolution.
\newblock {\em Numerical Algorithms 65}, 1 (2014), 23--42.

\bibitem{desbat2007compensation}
{\sc Desbat, L., Roux, S., and Grangeat, P.}
\newblock Compensation of some time dependent deformations in tomography.
\newblock {\em IEEE Transactions on Medical Imaging 26}, 2 (2007), 261--9.

\bibitem{diebold1991photo}
{\sc Diebold, G., Sun, T., and Khan, M.}
\newblock Photoacoustic monopole radiation in one, two, and three dimensions.
\newblock {\em Physical review letters 67}, 24 (1991), 3384--3387.

\bibitem{Engl2000}
{\sc Engl, H., Hanke, M., and Neubauer, A.}
\newblock {\em Regularization of Inverse Problems}.
\newblock Springer, New York, 2000.

\bibitem{FHR}
{\sc Finch, D., Haltmeier, M., and Rakesh}.
\newblock Inversion of spherical means and the wave equation in even
  dimensions.
\newblock {\em SIAM J. Appl. Math. 68}, 2 (2007), 392--412.

\bibitem{FPR}
{\sc Finch, D., Patch, S.~K., and Rakesh}.
\newblock Determining a function from its mean values over a family of spheres.
\newblock {\em SIAM J. Math. Anal. 35}, 5 (2004), 1213--1240.

\bibitem{frigyik2008x}
{\sc Frigyik, B., Stefanov, P., and Uhlmann, G.}
\newblock The x-ray transform for a generic family of curves and weights.
\newblock {\em Journal of Geometric Analysis 18}, 1 (2008), 89--108.

\bibitem{gel1969differential}
{\sc Gel'fand, I.~M., Graev, M.~I., and Shapiro, Z.~Y.}
\newblock Differential forms and integral geometry.
\newblock {\em Functional Analysis and its Applications 3}, 2 (1969), 101--114.

\bibitem{GoPe73}
{\sc Golub, G., and Pereyra, V.}
\newblock The differentiation of pseudo-inverses and nonlinear least squares
  whose variables separate.
\newblock {\em SIAM J.~Numer.~Anal. 10\/} (1973), 413--432.

\bibitem{GoPe03}
{\sc Golub, G., and Pereyra, V.}
\newblock Separable nonlinear least squares: the variable projection method and
  its applications.
\newblock {\em Inverse Problems 19\/} (2003), R1--R26.

\bibitem{grangeat2002theoretical}
{\sc Grangeat, P., Koenig, A., Rodet, T., and Bonnet, S.}
\newblock Theoretical framework for a dynamic cone-beam reconstruction
  algorithm based on a dynamic particle model.
\newblock {\em Physics in Medicine and Biology 47}, 15 (2002), 2611.

\bibitem{guillemin1985some}
{\sc Guillemin, V.}
\newblock On some results of gelfand in integral geometry.
\newblock In {\em Pseudodifferential operators and applications (Notre Dame,
  Ind., 1984)\/} (1985), vol.~43, pp.~149--155.

\bibitem{guillemin1990geometric}
{\sc Guillemin, V., and Sternberg, S.}
\newblock {\em Geometric asymptotics}.
\newblock No.~14. American Mathematical Soc., 1990.

\bibitem{Hadamard1923}
{\sc Hadamard, J.}
\newblock {\em Lectures on Cauchy's Problem in Linear Differential Equations}.
\newblock Yale University Press, New Haven, 1923.

\bibitem{hahn2014efficient}
{\sc Hahn, B.~N.}
\newblock Efficient algorithms for linear dynamic inverse problems with known
  motion.
\newblock {\em Inverse Problems 30}, 3 (2014), 035008.

\bibitem{hahn2014reconstruction}
{\sc Hahn, B.~N.}
\newblock Reconstruction of dynamic objects with affine deformations in
  computerized tomography.
\newblock {\em Journal of Inverse and Ill-posed Problems 22}, 3 (2014),
  323--339.

\bibitem{hahn2015dynamic}
{\sc Hahn, B.~N.}
\newblock Dynamic linear inverse problems with moderate movements of the
  object: Ill-posedness and regularization.
\newblock {\em Inverse Problems and Imaging 9}, 2 (2015), 395--413.

\bibitem{hahn2016detectable}
{\sc Hahn, B.~N., and Quinto, E.~T.}
\newblock Detectable singularities from dynamic radon data.
\newblock {\em arXiv preprint arXiv:1601.00936\/} (2016).

\bibitem{haltmeier2013inversion}
{\sc Haltmeier, M.}
\newblock Inversion of circular means and the wave equation on convex planar
  domains.
\newblock {\em Computers \& Mathematics with Applications 65}, 7 (2013),
  1025--1036.

\bibitem{haltmeier2014universal}
{\sc Haltmeier, M.}
\newblock Universal inversion formulas for recovering a function from spherical
  means.
\newblock {\em SIAM Journal on Mathematical Analysis 46}, 1 (2014), 214--232.

\bibitem{haltmeier2009reconstruction}
{\sc Haltmeier, M., Scherzer, O., and Zangerl, G.}
\newblock A reconstruction algorithm for photoacoustic imaging based on the
  nonuniform {FFT}.
\newblock {\em IEEE Transactions on Medical Imaging 11}, 28 (2009), 1727--1735.

\bibitem{haltmeier2005filtered}
{\sc Haltmeier, M., Schuster, T., and Scherzer, O.}
\newblock Filtered backprojection for thermoacoustic computed tomography in
  spherical geometry.
\newblock {\em Mathematical methods in the applied sciences 28}, 16 (2005),
  1919--1937.

\bibitem{HaHa93}
{\sc Hanke, M., and Hansen, P.}
\newblock Regularization methods for large-scale problems.
\newblock {\em Surveys on Mathematics for Industry 3\/} (1993), 253--315.

\bibitem{Hansen2010}
{\sc Hansen, P.}
\newblock {\em Discrete Inverse Problems: Insight and Algorithms}.
\newblock SIAM, Philadelphia, 2010.

\bibitem{hiriart1993convex}
{\sc Hiriart-Urruty, J.-B., and Lemar{\'e}chal, C.}
\newblock {\em Convex Analysis and Minimization Algorithms {I}: Fundamentals}.
\newblock Springer, 1993.

\bibitem{HomanZhou}
{\sc {Homan}, A., and {Zhou}, H.}
\newblock {Injectivity and stability for a generic class of generalized Radon
  transforms}.
\newblock {\em arXiv preprint arXiv:1502.06510\/} (2015).

\bibitem{hormander1993remarks}
{\sc H{\"o}rmander, L.}
\newblock Remarks on {H}olmgren's uniqueness theorem.
\newblock In {\em Annales de l'institut Fourier\/} (1993), vol.~43,
  pp.~1223--1251.

\bibitem{Ho1}
{\sc H{\"o}rmander, L.}
\newblock {\em The analysis of linear partial differential operators {I}:
  Distribution theory and Fourier analysis}.
\newblock Classics in Mathematics. Springer-Verlag, Berlin, 2003.

\bibitem{HKN}
{\sc Hristova, Y., Kuchment, P., and Nguyen, L.}
\newblock Reconstruction and time reversal in thermoacoustic tomography in
  acoustically homogeneous and inhomogeneous media.
\newblock {\em Inverse Problems 24}, 5 (2008), 055006, 25.

\bibitem{isola2008motion}
{\sc Isola, A., Ziegler, A., Koehler, T., Niessen, W., and Grass, M.}
\newblock Motion-compensated iterative cone-beam {CT} image reconstruction with
  adapted blobs as basis functions.
\newblock {\em Physics in medicine and biology 53}, 23 (2008), 6777.

\bibitem{Kaipio2004}
{\sc Kaipio, J., and Somersalo, E.}
\newblock {\em Statistical and Computational Inverse Problems}.
\newblock Springer, New York, 2004.

\bibitem{katsevich2008motion}
{\sc Katsevich, A.}
\newblock Motion compensated local tomography.
\newblock {\em Inverse Problems 24}, 4 (2008), 045012.

\bibitem{katsevich2010accurate}
{\sc Katsevich, A.}
\newblock An accurate approximate algorithm for motion compensation in
  two-dimensional tomography.
\newblock {\em Inverse Problems 26}, 6 (2010), 065007.

\bibitem{katsevich2016motion}
{\sc Katsevich, A.}
\newblock Reconstruction algorithms for a class of restricted ray transforms
  without added singularities.
\newblock {\em Journal of Fourier Analysis and Applications\/} (2016), 1--22.

\bibitem{katsevich2011local}
{\sc Katsevich, A., Silver, M., and Zamyatin, A.}
\newblock Local tomography and the motion estimation problem.
\newblock {\em SIAM Journal on Imaging Sciences 4}, 1 (2011), 200--219.

\bibitem{Kau75}
{\sc Kaufman, L.}
\newblock A variable projection method for solving separable nonlinear least
  squares problems.
\newblock {\em BIT Numer.~Math. 15\/} (1975), 49--57.

\bibitem{kruger}
{\sc Kruger, R.~A., Liu, P., Fang, Y., and Appledorn, C.~R.}
\newblock Photoacoustic ultrasound ({PAUS})---reconstruction tomography.
\newblock {\em Medical physics 22}, 10 (1995), 1605--1609.

\bibitem{kuchment2014radon}
{\sc Kuchment, P.}
\newblock {\em The Radon transform and medical imaging}, vol.~85.
\newblock SIAM, 2014.

\bibitem{kuchment2008mathematics}
{\sc Kuchment, P., and Kunyansky, L.}
\newblock Mathematics of thermoacoustic tomography.
\newblock {\em European Journal of Applied Mathematics 19}, 2 (2008), 191--224.

\bibitem{kuchment2011mathematics}
{\sc Kuchment, P., and Kunyansky, L.}
\newblock Mathematics of photoacoustic and thermoacoustic tomography.
\newblock In {\em Handbook of Mathematical Methods in Imaging}. Springer, 2011,
  pp.~817--865.

\bibitem{kuchment2015mathematical}
{\sc Kuchment, P., and Scherzer, O.}
\newblock Mathematical methods in photoacoustic imaging.
\newblock {\em to appear\/} (2015).

\bibitem{Kun07}
{\sc Kunyansky, L.~A.}
\newblock Explicit inversion formulae for the spherical mean {R}adon transform.
\newblock {\em Inverse Problems 23}, 1 (2007), 373--383.

\bibitem{Kun07s}
{\sc Kunyansky, L.~A.}
\newblock A series solution and a fast algorithm for the inversion of the
  spherical mean {R}adon transform.
\newblock {\em Inverse Problems 23}, 6 (2007), S11--S20.

\bibitem{Mueller2012}
{\sc Mueller, J., and Siltanen, S.}
\newblock {\em Linear and Nonlinear Inverse Problems with Practical
  Applications}.
\newblock SIAM, Philadelphia, 2012.

\bibitem{natterer2012photo}
{\sc Natterer, F.}
\newblock Photo-acoustic inversion in convex domains.
\newblock {\em Inverse Problems Imaging\/} (2012).

\bibitem{LinhNguyen2009}
{\sc Nguyen, L.~V.}
\newblock A family of inversion formulas in thermoacoustic tomography.
\newblock {\em Inverse Probl. Imaging 3}, 4 (2009), 649--675.

\bibitem{o2013variable}
{\sc O'leary, D.~P., and Rust, B.~W.}
\newblock Variable projection for nonlinear least squares problems.
\newblock {\em Computational Optimization and Applications 54}, 3 (2013),
  579--593.

\bibitem{oraevsky03opto}
{\sc Oraevsky, A.~A., and Karabutov., A.~A.}
\newblock {\em Biomedical photonics handbook 34}.
\newblock 2003, ch.~Optoacoustic tomography, pp.~1--34.

\bibitem{pack2004dynamic}
{\sc Pack, J.~D., and Noo, F.}
\newblock Dynamic computed tomography with known motion field.
\newblock In {\em Medical Imaging 2004\/} (2004), International Society for
  Optics and Photonics, pp.~2097--2104.

\bibitem{palamodov2007remarks}
{\sc Palamodov, V.}
\newblock Remarks on the general Funk-radon transform and thermoacoustic
  tomography.
\newblock {\em Inverse Problems and Imaging 4}, 4 (November 2010), 693 -- 702.

\bibitem{palamodov2012uniform}
{\sc Palamodov, V.}
\newblock A uniform reconstruction formula in integral geometry.
\newblock {\em Inverse Problems 28}, 6 (2012), 65014--65028.

\bibitem{phillips1962technique}
{\sc Phillips, D.~L.}
\newblock A technique for the numerical solution of certain integral equations
  of the first kind.
\newblock {\em Journal of the ACM 9}, 1 (1962), 84--97.

\bibitem{qian2011new}
{\sc {Qian}, J., {Stefanov}, P., {Uhlmann}, G., and {Zhao}, H.}
\newblock {A new numerical algorithm for thermoacoustic and photoacoustic
  tomography with variable sound speed}.
\newblock {\em SIAM J. Imaging Sci. 4}, 3 (2011), 850--883.

\bibitem{quinto1994radon}
{\sc Quinto, E.}
\newblock Radon transforms satisfying the Bolker assumption.
\newblock In {\em Proceedings of conference ``Seventy-five Years of Radon
  Transforms,'' International Press Co. Ltd., Hong Kong\/} (1994),
  pp.~263--270.

\bibitem{quinto1980dependence}
{\sc Quinto, E.~T.}
\newblock The dependence of the generalized radon transform on defining
  measures.
\newblock {\em Transactions of the American Mathematical Society 257}, 2
  (1980), 331--346.

\bibitem{rit2009comparison}
{\sc Rit, S., Sarrut, D., and Desbat, L.}
\newblock Comparison of analytic and algebraic methods for motion-compensated
  cone-beam CT reconstruction of the thorax.
\newblock {\em IEEE Transactions on Medical Imaging 28}, 10 (2009), 1513--1525.

\bibitem{RuWe80}
{\sc Ruhe, A., and Wedin, P.}
\newblock Algorithms for separable nonlinear least squares problems.
\newblock {\em SIAM Rev. 22\/} (1980), 318--337.

\bibitem{scherzer2009variational}
{\sc Scherzer, O., Grasmair, M., Grossauer, H., Haltmeier, M., and Lenzen, F.}
\newblock Variational methods in imaging, volume 167 of Applied Mathematical
  Sciences, 2009.

\bibitem{sheng2015constrained}
{\sc Sheng, Q., Wang, K., Matthews, T.~P., Xia, J., Zhu, L., Wang, L.~V., and
  Anastasio, M.~A.}
\newblock A constrained variable projection reconstruction method for
  photoacoustic computed tomography without accurate knowledge of transducer
  responses.
\newblock {\em IEEE Transactions on Medical Imaging 34}, 12 (2015), 2443--2458.

\bibitem{sjostrand1985singularites}
{\sc Sj{\"o}strand, J.}
\newblock Singularit{\'e}s analytiques microlocales, 
\newblock \emph{Ast{\'e}risque 95 (1982)}.
\newblock {\em Zbl0524 35007\/} (1985).

\bibitem{US}
{\sc Stefanov, P., and Uhlmann, G.}
\newblock Thermoacoustic tomography with variable sound speed.
\newblock {\em Inverse Problems 25}, 7 (2009), 075011, 16.

\bibitem{stefanov2011thermo}
{\sc Stefanov, P., and Uhlmann, G.}
\newblock Thermoacoustic tomography arising in brain imaging.
\newblock {\em Inverse Problems 27\/} (2011), 045004.

\bibitem{taguchi2007motion}
{\sc Taguchi, K., and Kudo, H.}
\newblock Motion compensated fan-beam reconstruction for computed tomography
  using derivative backprojection filtering approach.
\newblock In {\em 9th Int. Meeting Fully Three-Dimensional Image Reconstruction
  Radiol. Nucl. Med., Lindau, Germany\/} (2007).

\bibitem{taguchi2008motion}
{\sc Taguchi, K., and Kudo, H.}
\newblock Motion compensated fan-beam reconstruction for nonrigid
  transformation.
\newblock {\em IEEE Transactions on Medical Imaging 27}, 7 (2008), 907--917.

\bibitem{Tam}
{\sc Tam, A.~C.}
\newblock Applications of photoacoustic sensing techniques.
\newblock {\em Rev. Mod. Phys.}, 58(2) (1986), 381--431.

\bibitem{tikhonov1977solution}
{\sc Tikhonov, A., and Arsenin, V.}
\newblock {\em Solution of ill-posed problems}.
\newblock Winston \& {S}ons, Washington, DC, 1977.

\bibitem{van2007motion}
{\sc van Stevendaal, U., Lorenz, C., von Berg, J., and Grass, M.}
\newblock Motion-compensated reconstruction in helical cardiac {CT}.
\newblock In {\em Proceedings of the 9th international meeting on fully
  three-dimensional image reconstruction in radiology and nuclear medicine\/}
  (2007), pp.~437--40.

\bibitem{Vogel2002}
{\sc Vogel, C.}
\newblock {\em Computational Methods for Inverse Problems}.
\newblock SIAM, Philadelphia, 2002.

\bibitem{wang13biomedical}
{\sc Wang, L.~V., and Wu, H.}
\newblock {\em Biomedical optics: principles and imaging}.
\newblock John Wiley \& Sons, 2012.

\bibitem{XW05}
{\sc Xu, M., and Wang, L.~V.}
\newblock Universal back-projection algorithm for photoacoustic computed
  tomography.
\newblock {\em Physical Review E 71\/} (2005).

\bibitem{xu2006photoacoustic}
{\sc Xu, M., and Wang, L.~V.}
\newblock Photoacoustic imaging in biomedicine.
\newblock {\em Review of scientific instruments 77}, 4 (2006), 041101.

\bibitem{xu2002exact}
{\sc Xu, Y., Feng, D., and Wang, L.~V.}
\newblock Exact frequency-domain reconstruction for thermoacoustic tomography.
  {I}. planar geometry.
\newblock {\em IEEE Transactions on Medical Imaging 21}, 7 (2002), 823--828.

\end{thebibliography}

\end{document}